\documentclass[12pt,reqno]{amsart}

\usepackage{amsfonts, amsthm, amsmath,bm}
\allowdisplaybreaks[4]

\usepackage{rotating}

\usepackage{tikz}

\usepackage{stmaryrd}
\usepackage{multirow}

\usepackage{gensymb}
\usepackage{bm}
\usepackage{graphics}

\usepackage{amssymb}

\usepackage{amscd}

\usepackage[latin2]{inputenc}

\usepackage{t1enc}

\usepackage[mathscr]{eucal}

\usepackage{indentfirst}

\usepackage{graphicx}

\usepackage{graphics}

\usepackage{pict2e}

\usepackage{mathrsfs}

\usepackage{enumerate}

\usetikzlibrary{matrix,arrows}
\usetikzlibrary{positioning}
\usetikzlibrary{fit}
\usetikzlibrary{patterns}

\usepackage{pgfplots}

\usepackage[pagebackref]{hyperref}
\hypersetup{colorlinks=true}
\usepackage{cite}
\usepackage{color}
\usepackage{epic}
\usepackage{hyperref} 
\numberwithin{equation}{section}
\topmargin 0.8in
\textheight=8.2in
\textwidth=6.4in
\voffset=-.68in
\hoffset=-.68in

\theoremstyle{plain}

\usepackage{titlesec}
\titleformat{\section}
  {\normalfont\large\bfseries}{\thesection.}{.8ex}{} 
\titleformat{\subsection}
  {\normalfont\bfseries}{\thesubsection.}{.8ex}{} 

\usetikzlibrary{shapes}


\tikzstyle{pathdefault}=[draw, line width=1, solid, color=black]
\tikzstyle{nodedefault}=[circle, inner sep=1.5, fill=black]
\tikzstyle{empty}=[]
\tikzstyle{nodeellipsis}=[circle, inner sep=0.5, fill=black]
\tikzstyle{pathcolor1}=[draw, line width=1.3, densely dashed, color=red]
\tikzstyle{pathcolor2}=[draw, line width=1.6, densely dotted, color=blue]
\tikzstyle{pathcolorlight}=[draw, line width=1, dotted, color=lightgray]

\tikzstyle{arbpathcolor0}=[line width=1, dashdotted, color=black]
\tikzstyle{arbpathcolor1}=[line width=1, densely dashed, color=red]
\tikzstyle{arbpathdefault}=[line width=1, densely dotted, color=blue]

\newcounter{id}
\newcommand{\drawlinedotswithstyle}[4]{
 \def\x{{#3}}
 \def\y{{#4}}
 \tikzstyle{thispathstyle}=[#1]
 \tikzstyle{thisnodestyle}=[#2]
 \setcounter{id}{-1} 
 \foreach \j in {#3}{\stepcounter{id}} 
 \foreach \i in {1,...,\the\value{id}}{  
  \path[thispathstyle] (\x[\i],\y[\i]) --(\x[\i-1],\y[\i-1]); 
 }
 \foreach \i in {1,...,\the\value{id}}{  
  \node[thisnodestyle] at (\x[\i],\y[\i]) {}; 
 }
 \node[thisnodestyle] at (\x[0],\y[0]) {}; 
}

\definecolor{mhcblue}{HTML}{0077CC} 
\definecolor{davidsonred}{HTML}{AC1A2F} 

\definecolor{green}{RGB}{0, 180, 0}
\definecolor{yellow}{RGB}{180, 180, 0}

\setlength{\abovecaptionskip}{0pt}
\setlength{\belowcaptionskip}{5pt}

\newtheorem{theorem}{Theorem}[section]

\newtheorem{lemma}[theorem]{Lemma}

\newtheorem{corollary}[theorem]{Corollary}

\theoremstyle{definition}

\newtheorem{example}[theorem]{Example}

\newtheorem{conj}[theorem]{Conjecture}

\newtheorem{remark}[theorem]{Remark}

\newtheorem{?}[theorem]{Problem}

\newcommand{\dbrac}[1]{{\llbracket#1\rrbracket}} 	
\newcommand{\boks}[2]{({#1, #2})}   

\newcommand{\pattern}[4]{										
	\raisebox{0.6ex}{
		\begin{tikzpicture}[scale=0.35, baseline=(current bounding box.center), #1]
		\foreach \x/\y in {#4}		\fill[pattern=north east lines] (\x,\y) rectangle +(1,1);
		\draw (0.01,0.01) grid (#2+0.99,#2+0.99);
		\foreach \x/\y in {#3}		\filldraw (\x,\y) circle (6pt);
		\end{tikzpicture}}
}

\definecolor{red}{rgb}{1,0,0}


\def\S{\mathfrak{S}}


\def\exc{\operatorname{exc}}

\def\inv{\mathrm{inv}}

\def\ca-b{31\textsf{-}2}
\def\b-ca{2\textsf{-}31}


 
\def\r{\mathrm{r}}

\def\Nr{\mathrm{Nr}}

\def\erec{\mathrm{erec}}
\def\rec{\mathrm{rec}}
\def\arec{\mathrm{arec}}
\def\earec{\mathrm{earec}}
\def\rar{\mathrm{rar}}

\def\AREC{\mathrm{AREC}}

\begin{document}
\title[Equidistributions of mesh patterns of length two]
{Equidistributions of mesh patterns of length two and Kitaev and Zhang's conjectures}
\author[B. Han]{Bin Han}
\address[Bin Han]{Department of Mathematics, Bar-Ilan University, Ramat-Gan 52900, Israel.}
\email{han@math.biu.ac.il, han.combin@hotmail.com}

\author[J. Zeng]{Jiang Zeng}
\address[Jiang Zeng]{Univ Lyon, Universit\'e Claude Bernard Lyon 1, CNRS UMR 5208, Institut Camille Jordan, 43 blvd. du 11 novembre 1918, F-69622 Villeurbanne cedex, France}
\email{zeng@math.univ-lyon1.fr}

\date{\today}

\begin{abstract}
A systematic study of {\em avoidance} of mesh patterns of length 2 was conducted by Hilmarsson \emph{et al.} in 2015. In a recent paper
Kitaev and Zhang  examined the  distribution of 
the aforementioned 
patterns.    The aim of this paper is to  prove more equidistributions  of mesh pattern  and            confirm Kitaev and Zhang's four conjectures  by constructing two involutions on permutations.

%
%

\end{abstract}

\subjclass[2010]{05A05, 05A15, 05A19}

\keywords{permutation, mesh pattern, distribution, involution, continued fraction, antirecord, succession}

\maketitle



\section{Introduction}
Patterns in permutations and words have implicitly appeared in the mathematics literature for over a century, but interest in them has blown up  in the past four  decades (see~\cite{Kit,Stein,  BrCl, Hilmarsson2015Wilf, KZ19} and references therein),  and the research of this area continues to increase gradually. 

Let $S_n$ be the set of all permutations of length $n$. A (classical permutation) pattern is a permutation $\tau\in S_n$.  We could draw the pattern $231\in S_3$ as follows, where the horizontal lines represent the values and the vertical lines denote the positions in the pattern.
\[
\pattern{scale=1}{3}{1/2,2/3,3/1}{}
\]
To study the explicit expansions for certain permutation statistics as, possibly infinite, linear combinations of (classical) permutation patterns, Br\"and\'en and Claesson \cite{BrCl}  first introduced the notion of a {\em mesh pattern}, which generalize several classes of patterns.

A pair $(\tau,R)$, where $\tau$ is a permutation of length $k$ and $R$ is a subset of $\dbrac{0,k} \times \dbrac{0,k}$, where
$\dbrac{0,k}$ denotes the interval of the integers from $0$ to $k$, is a
\emph{mesh pattern} of length $k$.
Let $\boks{i}{j}$ denote the box whose corners have coordinates $(i,j), (i,j+1),
(i+1,j+1)$, and $(i+1,j)$. 
Mesh patterns can be depicted by shading the boxes in $R$. A mesh pattern with $\tau=231$ and $R = \{\boks{1}{2},\boks{2}{1}\}$ is drawn as follows.
\[
\pattern{scale=1}{3}{1/2,2/3,3/1}{1/2, 2/1}
\]
For example, the permutation $346512$ depicted in the following picture contains the mesh pattern $(231, \{\boks{1}{2},\boks{2}{1}\})$ since the subsequence $462$ forms the classical pattern $231$ and there are no points in the shaded
areas.
\[
346512 =
 \begin{tikzpicture}[scale=0.7, baseline=(current bounding box.center)]
    \draw (0.01,0.01) grid (6+0.99,6+0.99);
    \fill[pattern=north east lines] (2,4) rectangle +(1,2);
  \fill[pattern=north east lines] (3,2) rectangle +(3,2);
  \filldraw (1,3) circle (6pt);
  \filldraw (2,4) circle (6pt);
  \filldraw (3,6) circle (6pt);
  \filldraw (4,5) circle (6pt);
  \filldraw (5,1) circle (6pt);
  \filldraw (6,2) circle (6pt);
  \draw (2,4) circle (12pt);
  \draw (3,6) circle (12pt);
  \draw (6,2) circle (12pt);
 \end{tikzpicture}
\]

The mesh patterns and their generalizations were studied in many papers; e.g.\ see \cite{AKV,Borie,Hilmarsson2015Wilf,JKR,KL,KR1,KRT,KZ19,T1,T2}. 
In the first systemically study of the mesh patterns avoidance, Hilmarsson et al.~\cite{Hilmarsson2015Wilf}  solved 25 out of 65 non-equivalent {\em avoidance} cases of patterns of length 2.
In a recent paper~\cite{KZ19}, Kitaev and Zhang further studied the distributions of mesh patterns considered in \cite{Hilmarsson2015Wilf} by giving 27 distribution results  
see \cite[Table~1]{KZ19}.
Moreover, for the unsolved case, they gave an equidistribution result and  conjectured 6 more equidistributions (see Table~\ref{tab-2}). In this paper, we prove 3 conjectured equidistributions and 2 more equidistribuions (see Table~\ref{tab-3}) by constructing two involutions.

\begin{table}[htbp]
{
		\renewcommand{\arraystretch}{1.3}
	\begin{center}
	\begin{tabular}{|c|c|c|c||c|c|c|}
		\hline
		&  	{Nr.\ } & {Repr.\ $p$}  & {Ref.}  &  {Nr.\ } & {Repr.\ $p$}  & {Ref.}  
		\\[6pt]
		\hline \hline
		{proved} 
		& 48 & $\pattern{scale = 0.6}{2}{1/1,2/2}{0/1,1/2,0/0,2/1,2/2}$&\multirow{2}{*}{\cite[Theorem~5.1]{KZ19}}&&&
		 \\[6pt]	\cline{2-3} 
     {equidistributions}   & 49 &	$\pattern{scale = 0.6}{2}{1/1,2/2}{0/1,1/2,0/0,1/1,2/0}$   &	 &&&	\\[6pt]	\hline
   	 & 	23 &	$\pattern{scale = 0.6}{2}{1/1,2/2}{0/0,0/2,1/0,1/1,1/2}$  & \multirow{2}{*}{Theorem~\ref{main2}}  &53 &	$\pattern{scale = 0.6}{2}{1/1,2/2}{0/1,1/2,0/0,2/1}$  & \multirow{2}{*}{Theorem~\ref{main1}}
		\\[6pt]
	\multirow{2}{*}{conjectured} & 	24 &	$\pattern{scale = 0.6}{2}{1/1,2/2}{0/0,0/1,1/0,1/1,1/2}$ &&	54 &  $\pattern{scale = 0.6}{2}{1/1,2/2}{0/1,0/0,1/1,2/2}$   &			\\[6pt] \cline{2-7} 
	\multirow{3}{*}{equidistributions} &	 \multirow{2}{*}{48} & \multirow{2}{*}{$\pattern{scale = 0.6}{2}{1/1,2/2}{0/1,1/2,0/0,2/1,2/2}$}	  & 	
	&	57 & $\pattern{scale = 0.6}{2}{1/1,2/2}{0/1,1/2,1/1,2/0}$	&	 \multirow{2}{*}{N/A}	\\[6pt]		
	 	& \multirow{2}{*}{49} &   \multirow{2}{*}{$\pattern{scale = 0.6}{2}{1/1,2/2}{0/1,1/2,0/0,1/1,2/0}$}   &	 \multirow{2}{*}{Theorem~\ref{main1}  and}
		&	58 &  $\pattern{scale = 0.6}{2}{1/1,2/2}{0/1,1/0,1/1,2/2}$ &	 	\\[6pt]	\cline{5-7}
	&\multirow{2}{*}{50} &  \multirow{2}{*}{$\pattern{scale = 0.6}{2}{1/1,2/2}{0/1,1/2,0/0,1/1,2/2}$} &\multirow{2}{*}{\cite[Theorem~5.1]{KZ19}} & 61 & $\pattern{scale = 0.6}{2}{1/1,2/2}{0/1,1/2,0/0,2/0}$	&  \multirow{2}{*}{N/A}		\\[6pt]
		&  && &	62 & $\pattern{scale = 0.6}{2}{1/1,2/2}{0/1,1/0,0/0,2/2}$ &		\\[6pt]	\hline
	\end{tabular}
	\end{center}
}
\medskip
	\caption{Equidistributions for which enumeration is unknown. Pattern's numbers are adopted from  \cite{Hilmarsson2015Wilf,KZ19}}
 	\label{tab-2}
\end{table}

 \begin{table}[htbp]
{
		\renewcommand{\arraystretch}{1.3}
	\begin{center}
	\begin{tabular}{|c|c|c|c||c|c|c|}
		\hline
		&  	{Nr.\ } & {Repr.\ $p$}  & {Ref.}  &  {Nr.\ } & {Repr.\ $p$}  & {Ref.}  
		\\[6pt]
		\hline \hline
		{proved} 
		& $1^*$ & $\pattern{scale=0.5}{2}{1/2,2/1}{1/0,2/0,2/2}$&\multirow{2}{*}{Theorem~\ref{main2}}& $3^*$&  $\pattern{scale=0.5}{2}{1/2,2/1}{1/0, 1/1,2/0,2/2}$ & $\multirow{2}{*}{Theorem~\ref{main2}}$
		 \\[6pt]	
     {equidistributions}   &  $2^*$ &	$\pattern{scale=0.5}{2}{1/2,2/1}{2/1,1/0,2/0}$  & 	 & $4^*$ &$\pattern{scale=0.5}{2}{1/2,2/1}{2/1,1/0,2/0,1/2}$  &	\\[6pt] 	\hline
	\end{tabular}
	\end{center}
}
\medskip
	\caption{More proved equidistributions. Pattern's numbers are not considered in \cite{Hilmarsson2015Wilf,KZ19}}
 	\label{tab-3}
\end{table} 
%

 For a pattern $p$ and a permutation $\pi$, we let $p(\pi)$ denote the number of occurrences of $p$ in $\pi$. 
Kitaev and Zhang~\cite[Conjecture~6.1]{KZ19} conjectured a Stieltjes continued fraction formula for 
the distribution of pattern Nr.\ $3$ = $\pattern{scale = 0.6}{2}{1/1,2/2}{0/0,0/1,1/2}$ (see \cite[$A200545$]{OEIS}), which 
is equivalent to the following identity.

\begin{conj}\cite[Conjecture~6.1]{KZ19}\label{conj1} 
 We have 
 \begin{subequations}
\begin{align}\label{cf:sz}
\sum_{n\geq 0}t^n
 \sum_{\pi\in S_n}y^{\pattern{scale=0.5}{2}{1/1,2/2}{0/0,0/1,1/2}(\pi)}\;=\;
 \cfrac{1}
 {1-\cfrac{\alpha_1 t}
 {1-\cfrac{\alpha_2 t}
 {1-\cdots}}}
\end{align}
\text{with coefficients} 
\begin{align}
\alpha_{2k-1}=k,\quad 
\alpha_{2k}=y+k-1.
\end{align}
\end{subequations}
\end{conj}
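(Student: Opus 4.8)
The plan is to convert the Stieltjes fraction in \eqref{cf:sz} into its equivalent Jacobi fraction by the standard even contraction and then to realize the resulting weighted-path model bijectively. Writing $\alpha_1,\alpha_2,\alpha_3,\dots = 1,\,y,\,2,\,y{+}1,\,3,\,y{+}2,\dots$, the contraction $b_0=\alpha_1$, $b_k=\alpha_{2k}+\alpha_{2k+1}$, $\lambda_k=\alpha_{2k-1}\alpha_{2k}$ turns \eqref{cf:sz} into the J-fraction with $b_0=1$, $b_k=y+2k$ for $k\ge 1$, and $\lambda_k=k(y+k-1)$. By Flajolet's combinatorial theory of continued fractions, the coefficient of $t^n$ in this J-fraction is the generating polynomial of Motzkin paths of length $n$ in which an up step has weight $1$, a level step at height $h$ has weight $b_h$, and a down step leaving height $h$ has weight $\lambda_h$. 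Thus it suffices to produce a weight-preserving bijection from $S_n$ onto these weighted Motzkin paths, with one step per letter of $\pi$.

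The first step is a combinatorial reformulation of the statistic. For an occurrence $(i,j)$, emptiness of $\boks{0}{0}$ and $\boks{0}{1}$ forces every entry to the left of position $i$ to exceed $\pi_j$, while emptiness of $\boks{1}{2}$ forces every entry strictly between $i$ and $j$ to be smaller than $\pi_j$. Hence, for a fixed top $j$, an occurrence exists if and only if, among $\pi_1,\dots,\pi_{j-1}$, all entries larger than $\pi_j$ precede all entries smaller than $\pi_j$ and at least one entry is smaller than $\pi_j$; in that case the bottom $i$ is uniquely the leftmost position carrying a value below $\pi_j$. Consequently $p(\pi)$ counts the positions $j$ that are not left-to-right minima and do not serve as the final, middle-valued entry of any classical $132$ pattern. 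This local ``per $j$'' description is exactly the kind of statistic that Flajolet's machinery can track.

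I would then read $\pi$ through a Foata--Zeilberger/Fran\c{c}on--Viennot-type encoding into a Laguerre history, recording for each entry a step whose type is dictated by the comparison of $\pi_j$ with its neighbours and whose label is the current \emph{level}, namely the number of active sites (slots that an as-yet-unplaced value may still fill). Under the reformulation, each entry should contribute either a factor $y$, when it closes its unique occurrence, or a factor equal to the number of competing active sites, when it does not; summing the labels over a fixed level $h$ then collapses the even steps to weight $y+h-1$ and the odd steps to weight $h$, reproducing $\alpha_{2h}=y+h-1$ and $\alpha_{2h-1}=h$, and after contraction the weights $b_h$ and $\lambda_h$ above.

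The main obstacle is precisely this last verification: showing that the $132$-completion criterion is \emph{local} with respect to the active-site filtration, so that every letter contributes a single clean factor that splits as ``$y$ plus a level count.'' Concretely one must prove that a left-to-right minimum closes no occurrence, that a position completing a $132$ closes no occurrence, and that every remaining position closes exactly one, all without the occurrence-closing choice and the active-site bookkeeping ever interfering; this amounts to controlling how the set of active sites evolves as the history is built. Once this local weight computation is pinned down, Flajolet's theorem yields \eqref{cf:sz} directly. As an independent sanity check, setting $y=1$ gives $\alpha_{2k-1}=\alpha_{2k}=k$, the classical S-fraction of $\sum_n n!\,t^n$, consistent with $P_n(1)=|S_n|=n!$, and the first few specializations $P_1=1$, $P_2=1+y$, $P_3=1+4y+y^2$ match a direct count.
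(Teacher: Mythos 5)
Your reduction of the statistic is correct and worth keeping: for each top index $j$ there is at most one occurrence of Nr.\ 3 (with bottom the leftmost entry smaller than $\pi_j$), occurring iff in the prefix $\pi_1\cdots\pi_{j-1}$ every entry larger than $\pi_j$ precedes every entry smaller than $\pi_j$ and a smaller entry exists; your contraction arithmetic ($b_0=1$, $b_k=y+2k$, $\lambda_k=k(y+k-1)$) and the checks $P_2=1+y$, $P_3=1+4y+y^2$ are all right. But the proposal is not a proof, and you say so yourself: everything rests on the final lemma that some Laguerre-history encoding makes each letter contribute a clean factor of $h$ or $y+h-1$, and that lemma is exactly the hard content, not a routine verification. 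Concretely, whether $j$ closes an occurrence depends on the interleaving of entries \emph{above} $\pi_j$ with entries below $\pi_j$ in the whole prefix. In a Fran\c{c}on--Viennot scan by position, this is not a function of the current height (the active-site count records how many slots are open, not whether the large values already seen are separated from the small ones), and in a Foata--Zeilberger scan by value, the entries larger than $\pi_j$ have not yet been placed when $\pi_j$ is processed, so the ``$y$ versus level count'' decision is not determined at the moment the corresponding step is drawn. So the statistic is not manifestly history-local, and no amount of bookkeeping sketched in your outline establishes that it becomes local after summing labels; this is a genuine gap, not a deferred detail.

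It is instructive that the paper does not attempt such a direct encoding. Instead it transports Nr.\ 3 by the inverse and reverse symmetries to a mesh pattern on $21$ (see \eqref{eq:triple}), proves via the explicit involution $\Phi$ of Theorem~\ref{main1} --- built from the complementation operators $\varphi^{(i_k)}$ acting on the blocks between consecutive antirecords --- that this pattern is equidistributed with $\pattern{scale=0.5}{2}{1/2,2/1}{0/2,1/0,2/0}$, which by Lemma~\ref{lemma1:main1} is the exclusive-record statistic $\erec$, and then quotes the Sokal--Zeng master S-fraction (Theorem~\ref{thm:1.2} with $x=v=q=1$), which already yields $\alpha_{2k-1}=k$, $\alpha_{2k}=y+k-1$ for $\erec$. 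In other words, the continued-fraction machinery you invoke is applied in the literature to $\erec$, where the per-letter weight factorization \emph{is} local, and the entire difficulty of the conjecture is concentrated in the equidistribution of Nr.\ 3 with $\erec$ --- which is essentially your missing lemma in disguise. If you want to complete your argument, the realistic route is to prove that equidistribution (e.g., by an involution such as $\Phi$, which must preserve the antirecord structure while complementing the letters above each antirecord) and then cite the known fraction, rather than to force Nr.\ 3 directly through Flajolet's theorem.
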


Presenting their  conjecture in this way, we notice that 
the S-continued fraction \eqref{cf:sz} 
 appears in a recent paper of Sokal and Zeng~\cite{SZ20}.
Let~us reformulate the relevant permutation statistics in \cite{SZ20} in terms of mesh patterns.
Given a permutation $\pi \in S_n$,
an index $i \in [n]$ (or a value $\pi(i) \in [n]$) is called 
\begin{itemize}
\item  an {\em excedance} if $\pi(i)>i$;
   
 \item an {\em inversion} if $\pi(j)>\pi(i)$ for $1\leq j<i$; in other words, an inversion of $\pi$ is one occurrence of pattern $\pattern{scale=0.5}{2}{1/2,2/1}{}$ of $\pi$;

\item a {\em record}\/ (rec) (or {\em left-to-right maximum}\/)
         if $\pi(j) < \pi(i)$ for all $j < i$
      [note in particular that the index 1 is always a record
       and that the value $n$ is always a record];
in other words, a record of $\pi$ is one occurrence of pattern $\pattern{scale=0.8}{1}{1/1}{0/1}$ of  $\pi$;

   \item an {\em antirecord}\/ (arec) (or {\em right-to-left minimum}\/)
         if $\pi(j) > \pi(i)$ for all $j > i$
      [note in particular that the index $n$ is always an antirecord
       and that the value 1 is always an antirecord];
in other words, an antirecord of $\pi$ is one occurrence of pattern $\pattern{scale=0.8}{1}{1/1}{1/0}$ of  $\pi$;      
       
   \item an {\em exclusive record}\/ (erec) if it is a record and not also
         an antirecord; in other words, an exclusive record of $\pi$ is one occurrence of pattern $\pattern{scale=0.5}{2}{1/2,2/1}{0/2,1/0,2/0}$ of  $\pi$, see \eqref{eq2:lemma1};       
         
   \item an {\em exclusive antirecord}\/ (earec) if it is an antirecord and not also
     a record; in other words, an exclusive antirecord of $\pi$ is one occurrence of pattern $\pattern{scale=0.5}{2}{1/2,2/1}{0/2,1/2,2/0}$ of  $\pi$, see \eqref{eq1:lemma1};     
     
 \item a {\em record-antirecord}\/ (rar) (or {\em pivot}\/)
     if it is both a record and an antirecord;
   in other words, a record-antirecord of $\pi$ is one occurrence of pattern  \pattern{scale=0.8}{1}{1/1}{0/1,1/0}  of  $\pi$.


\end{itemize}
We denote the number of excedances, records, antirecords, exclusive records, exclusive antirecords and record-antirecords in $\pi$
by $\exc(\pi)$, $\rec(\pi)$, $\arec(\pi)$, $\erec(\pi)$, $\earec(\pi)$ and $\rar(\pi)$,  respectively. 

Dumont and Kreweras~\cite{DK88} gave the joint distribution of $(\pattern{scale=0.8}{1}{1/1}{0/1},\pattern{scale=0.5}{2}{1/2,2/1}{0/2,1/2,2/0})$, Zeng \cite{Zeng89} gave the joint distribution of $(\pattern{scale=0.8}{1}{1/1}{0/1},\pattern{scale=0.5}{2}{1/2,2/1}{0/2,1/2,2/0}, \pattern{scale = 0.5}{2}{1/2,2/1}{})$. 
Recently Sokal and Zeng~\cite{SZ20} proved  much more general results.
 For example,
define 
the generating function of the generalized Eulerian polynomials
\begin{align}\label{poly:dkz}
F(x,y,z, v, q; t)=\sum_{n=0}^\infty t^n\sum_{\sigma\in \S_n}
x^{\arec(\sigma)}y^{\erec(\sigma)}z^{\rar(\sigma)}v^{\exc(\sigma)}q^{\inv(\sigma)}.
\end{align}
From \cite[Theorems~2.7 and 2.8]{SZ20} we derive   the following result.
\begin{theorem}\label{thm:1.2}
We have 
\begin{subequations}
\begin{align}\label{cf-SZ}
F(x,y,z, v, q; t)=
\frac{F(x,y,1, v,q; t)}{1+x(1-z)tF(x,y, 1, v,q; t)},
\end{align}
where
\begin{align}\label{cf-pattern}
F(x,y,1, v,q; t)=\cfrac{1}{1-\cfrac{\alpha_1 t}{1-\cfrac{\alpha_2 t}
{1-\cdots}}}
\end{align}
\text{with coefficients} 
\begin{align}
\alpha_{2k-1}&=q^{k-1}(x+q+q^2+\cdots +q^{k-1})\\
\alpha_{2k}&=q^{k}v(y+q+q^2+\cdots +q^{k-1}).
\end{align}
\end{subequations}
\end{theorem}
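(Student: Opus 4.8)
The plan is to establish the two displayed equations separately: the Stieltjes-type continued fraction \eqref{cf-pattern} for $F(x,y,1,v,q;t)$ is obtained as a specialization of the master S-fraction of Sokal and Zeng, while the algebraic relation \eqref{cf-SZ} is obtained from an elementary direct-sum decomposition argument that reduces the general parameter $z$ to the already-settled case $z=1$.

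For \eqref{cf-pattern} I would start from the S-fraction of \cite[Theorems~2.7 and 2.8]{SZ20}, whose general weights are assigned letter by letter according to the record/antirecord classification (exclusive record, exclusive antirecord, record-antirecord) together with the excedance type and a variable counting inversions. Specializing their variables so that an exclusive record carries $y$, every antirecord carries $x$, every excedance carries $v$, and each inversion carries $q$ -- and setting the record-antirecord variable equal to $1$ -- collapses their polynomial to $\sum_n t^n\sum_{\sigma\in\S_n} x^{\arec(\sigma)}y^{\erec(\sigma)}v^{\exc(\sigma)}q^{\inv(\sigma)}$, which is $F(x,y,1,v,q;t)$. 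Reading off the corresponding coefficients then gives $\alpha_{2k-1}=q^{k-1}(x+q+\cdots+q^{k-1})$ and $\alpha_{2k}=q^{k}v(y+q+\cdots+q^{k-1})$. I expect this to be the main obstacle: the step is pure bookkeeping, but one must check carefully that the chosen specialization of the (finer) Sokal--Zeng statistics really telescopes to the quoted coefficients, and in particular that $q$ is correctly identified with the inversion count rather than with a crossing/nesting refinement.

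For \eqref{cf-SZ} I would argue combinatorially. First I would observe that each of $\arec,\erec,\exc,\inv$ -- and also $\rar$ -- is additive under the direct sum $\oplus$: if $\pi=\alpha\oplus\beta$, then there are no inversions between the two blocks and the record/antirecord status of every letter is determined inside its own block (the values of the left block are all dominated by those of the right block), so $s(\pi)=s(\alpha)+s(\beta)$ for each such statistic $s$. Consequently the weight $x^{\arec}y^{\erec}z^{\rar}v^{\exc}q^{\inv}$ is multiplicative over $\oplus$, and writing each permutation uniquely as a sequence of sum-indecomposable blocks gives
\[
F(x,y,z,v,q;t)=\frac{1}{1-C(x,y,z,v,q;t)},
\]
where $C$ is the generating function of sum-indecomposable permutations weighted by the same statistics.

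It then remains to pin down the role of $z$. A letter $\pi(i)$ is a record-antirecord exactly when $\pi(j)<\pi(i)$ for all $j<i$ and $\pi(j)>\pi(i)$ for all $j>i$; counting the $i-1$ smaller values before position $i$ and the $n-i$ larger values after it forces $\pi(i)=i$ and $\pi(\{1,\dots,i\})=\{1,\dots,i\}$, so every record-antirecord is a nontrivial cut point of the direct-sum decomposition. Hence a sum-indecomposable permutation of size $\ge 2$ carries no record-antirecord, while the unique permutation of size $1$ is a single record-antirecord of weight $xz$. Therefore $C(x,y,z,v,q;t)-C(x,y,1,v,q;t)=x(z-1)t$, whence
\[
\frac{1}{F(x,y,z,v,q;t)}-\frac{1}{F(x,y,1,v,q;t)}=x(1-z)t,
\]
and solving for $F(x,y,z,v,q;t)$ yields exactly \eqref{cf-SZ}, completing the reduction to the case $z=1$.
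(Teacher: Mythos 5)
Your proposal is correct, and while your treatment of \eqref{cf-pattern} coincides with the paper's (both obtain it by specializing the master continued fractions of \cite[Theorems~2.7 and 2.8]{SZ20}; the paper records the exact choices, namely $w_0=xz$, $y=qv$, $u=1$, $v_1=v_2=qv$, $p_{+}=p_{-}=q$, $q_{+}=q_{-}=q^2$, so the ``bookkeeping'' you flag as the main obstacle is genuine but is carried out in \cite{SZ20} itself), your derivation of \eqref{cf-SZ} takes a genuinely different route. The paper never leaves the continued-fraction formalism: choosing $w_0=xz$ rather than $w_0=x$ makes the leading Jacobi coefficient $\gamma_0=xz$, and writing $1-\gamma_0 t=1+x(1-z)t-\alpha_1 t$ and de-contracting the tail $\gamma_n=\alpha_{2n}+\alpha_{2n+1}$, $\beta_n=\alpha_{2n-1}\alpha_{2n}$ back into an S-fraction yields $F(x,y,z,v,q;t)=1/\bigl(1+x(1-z)t-\alpha_1 t/(1-\alpha_2 t/(1-\cdots))\bigr)$, which is \eqref{cf-SZ}. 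You instead prove the renewal identity $F=1/(1-C)$, with $C$ the generating function of sum-indecomposable permutations, via additivity of $\arec$, $\erec$, $\rar$, $\exc$, $\inv$ under $\oplus$ (all of which check out: a record--antirecord at position $i$ does force $\pi(i)=i$ and $\pi(\{1,\dots,i\})=\{1,\dots,i\}$, so pivots are exactly the singleton blocks, the singleton has weight $xzt$, and hence $C(z)-C(1)=x(z-1)t$, i.e. $1/F(z)-1/F(1)=x(1-z)t$). This is elementary, self-contained, and isolates the $z$-dependence as a transparent combinatorial fact valid for any weight multiplicative under $\oplus$; it is essentially the alternative the paper itself sketches in its Remark (``following the same steps in the special case as in \cite{KZ19}'' and then invoking \cite[Theorem~2.8]{SZ20} for the $z=1$ fraction). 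What each approach buys: the paper's specialization delivers \eqref{cf-SZ} and \eqref{cf-pattern} simultaneously from a single master theorem, whereas yours decouples the two, giving a conceptual explanation of the M\"obius-type transformation in $z$ at the cost of still needing \cite{SZ20} for the $z=1$ continued fraction.
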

\begin{proof} This follows from \cite[Theorems~2.7 and 2.8]{SZ20}
by specializing the parameters.
We just indicate the appropriate specialisation  and refer the reader to \cite{SZ20}
for further details.
In the specialization (2.57) of \cite[Theorem~2.7]{SZ20},  if we   choose
$w_0=xz$ (instead of $w_0=x$ in \cite{SZ20}) and 
$$
y=qv,\quad u=1,\quad v_1=v_2=qv, \quad p_{+}=p_{-}=q,\quad  q_{+}=q_{-}=q^2, 
$$
then equation (2.52) of \cite{SZ20} reduces to
\begin{align}
F(x,y,z, v, q; t)&=\frac{1}{1-\gamma_0 t-
\cfrac{\beta_1 t}{1-\gamma_1 t-\cfrac{\beta_2 t}{1-\cdots}}}
\end{align}
with 
\begin{align}
\gamma_0=xz,\quad
\gamma_n=\alpha_{2n}+\alpha_{2n+1},\quad 
\beta_n=\alpha_{2n-1}\alpha_{2n}.
\end{align}
Therefore, the J-fraction formula can be written as (by contracting the 
S-fraction starting from the second line),
\begin{align}
F(x,y,z, v, q; t)&=\frac{1}{1+x(1-z)t-
\cfrac{\alpha_1 t}{1-\cfrac{\alpha_2 t}{1-\cdots}}},
\end{align}
which is equivalent to \eqref{cf-SZ}.
\end{proof}
\begin{remark}
\begin{itemize}
\item We can also prove  \eqref{cf-SZ} by following the same
steps in  the special case  as in \cite{KZ19}  and then derive  \eqref{cf-pattern} directly from \cite[Theorem 2.8]{SZ20}. 
\item The case $x=y=v=q=1$ of Theorem~\ref{thm:1.2} is Theorem~1.1 in \cite{KZ19}.
\item Since 
$(\arec, \inv)\pi=(\rec, \inv)\pi^{-1}$ we derive from
 \cite{Zeng89} that  
\begin{equation}
F(x,1,1, 1,q; t)=\sum_{n=0}^\infty x(x+q)\ldots (x+q+\cdots q^{n-1})t^n.
\end{equation}
\end{itemize}
\end{remark}


For $\pi=\pi(1)\ldots \pi(n)\in S_n$ we 
define the following three associated  permutations:
\begin{align}
\pi^{-1} &:=\pi^{-1}(1)\pi^{-1}(2)\cdots\pi^{-1}(n)\label{def:invpi}\\
\pi^\r &:=\pi(n)\cdots\pi(2)\pi(1)\label{def:revpi}\\
\pi^c &:=(n+1-\pi(1))(n+1-\pi(2))\cdots(n+1-\pi(n))\label{def:comlpi}
\end{align}
Obviously 
we have 
$$
\pattern{scale=0.5}{2}{1/2,2/1}{0/2,1/0,2/0} (\pi)=
\pattern{scale=0.5}{2}{1/1,2/2}{0/0,1/2,2/2} (\pi^c)=
\pattern{scale=0.5}{2}{1/2,2/1}{0/2,1/2,2/0} (\pi^{r\circ c})=
\pattern{scale=0.5}{2}{1/1,2/2}{0/0,1/0,2/2} (\pi^{r})
$$
and
\begin{align*}
&\pattern{scale=0.5}{2}{1/1,2/2}{0/0,0/1,1/2} (\pi)=
\pattern{scale=0.5}{2}{1/2,2/1}{0/1,0/2,1/0} (\pi^c)=
\pattern{scale=0.5}{2}{1/1,2/2}{1/0,2/1,2/2} (\pi^{r\circ c})=
\pattern{scale=0.5}{2}{1/2,2/1}{1/2,2/0,2/1} (\pi^{r})\\
=&\pattern{scale=0.5}{2}{1/1,2/2}{0/0,1/0,2/1}(\pi^{-1})
=\pattern{scale=0.5}{2}{1/2,2/1}{0/1,2/0,1/0} (\tau^r)=
\pattern{scale=0.5}{2}{1/1,2/2}{0/1,1/2,2/2} (\tau^{r\circ c})=
\pattern{scale=0.5}{2}{1/2,2/1}{2/1,0/2,1/2} (\tau^{c})
\end{align*}
with  $\tau=\pi^{-1}$.

\begin{lemma}\label{lemma1:main1}
For $\pi\in S_n$, we have
\begin{align}
\earec(\pi)&=\pattern{scale=0.5}{2}{1/2,2/1}{0/2,1/2,2/0} (\pi)
=\pattern{scale=0.5}{2}{1/2,2/1}{0/1,1/1,2/0} (\pi)
=\pattern{scale=0.5}{2}{1/2,2/1}{0/1,0/2,2/0} (\pi)
=\pattern{scale=0.5}{2}{1/2,2/1}{1/1,1/2,2/0} (\pi)
,\label{eq1:lemma1}\\
\erec(\pi)&=\pattern{scale=0.5}{2}{1/2,2/1}{0/2,1/0,2/0} (\pi)
=\pattern{scale=0.5}{2}{1/2,2/1}{0/2,1/1,2/1} (\pi)
=\pattern{scale=0.5}{2}{1/2,2/1}{0/2,2/0,2/1} (\pi)
=\pattern{scale=0.5}{2}{1/2,2/1}{0/2,1/0,1/1} (\pi).\label{eq2:lemma1}
\end{align}
\end{lemma}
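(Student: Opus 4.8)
The plan is to prove \eqref{eq1:lemma1} directly and then obtain \eqref{eq2:lemma1} for free from the reverse-complement symmetry already recorded just above the lemma. For \eqref{eq1:lemma1} I will show that each of the four mesh patterns, all of which refine the classical pattern $\pattern{scale=0.5}{2}{1/2,2/1}{}$ (i.e.\ a pair of positions $a<b$ with $\pi(a)>\pi(b)$), has exactly $\earec(\pi)$ occurrences, by exhibiting a bijection that attaches to every exclusive antirecord a \emph{unique} admissible occurrence. Throughout, for an occurrence $(a,b)$ the point $(b,\pi(b))$ plays the role of the lower-right dot and $(a,\pi(a))$ the upper-left dot; the vertical strips (left of $a$, between $a$ and $b$, right of $b$) are indexed $0,1,2$ and the horizontal strips (below $\pi(b)$, between $\pi(b)$ and $\pi(a)$, above $\pi(a)$) are indexed $0,1,2$, so that a shaded box $\boks{i}{j}$ forbids any dot of $\pi$ lying simultaneously in vertical strip $i$ and horizontal strip $j$.

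The common feature of all four patterns is the shaded box $\boks{2}{0}$, which forbids any value below $\pi(b)$ to the right of $b$; hence in every occurrence the lower dot $b$ is a right-to-left minimum, i.e.\ an antirecord. Moreover the mere existence of a partner $a<b$ with $\pi(a)>\pi(b)$ means some earlier entry exceeds $\pi(b)$, i.e.\ $b$ is \emph{not} a record; conversely a record $b$ admits no such $a$ and contributes nothing. Thus in all four cases only exclusive antirecords $b$ can occur, and it remains to check that each such $b$ has precisely one partner $a$. Writing $P=\{\,j<b:\pi(j)>\pi(b)\,\}$ (nonempty exactly when $b$ is an exclusive antirecord), the remaining shading pins $a$ down as follows: for the boxes $\boks{0}{2},\boks{1}{2}$ the dot $a$ must be the position of $\max_{j<b}\pi(j)$ (equivalently the last record before $b$); for $\boks{0}{1},\boks{1}{1}$ it must be the position realizing $\min\{\pi(j):j\in P\}$ (the smallest earlier value still exceeding $\pi(b)$); for $\boks{0}{1},\boks{0}{2}$ it must be $\min P$ (the leftmost earlier entry exceeding $\pi(b)$); and for $\boks{1}{1},\boks{1}{2}$ it must be $\max P$ (the rightmost such). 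In each case one checks that this choice satisfies all the shaded constraints and is the only one that does, so the pattern has exactly one occurrence per exclusive antirecord and therefore counts $\earec(\pi)$.

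Finally, \eqref{eq2:lemma1} follows without extra work. The reverse-complement $\pi\mapsto\pi^{r\circ c}$ rotates the grid by $180^{\circ}$, fixes the underlying pattern $\pattern{scale=0.5}{2}{1/2,2/1}{}$, and sends each shaded box $\boks{i}{j}$ to $\boks{2-i}{2-j}$; under this map the four patterns of \eqref{eq2:lemma1} are exactly the images of the four patterns of \eqref{eq1:lemma1}, as already displayed just before the lemma. Since a record of $\pi$ is precisely an antirecord of $\pi^{r\circ c}$ and ``exclusive'' is preserved, one has $\erec(\pi)=\earec(\pi^{r\circ c})$, and applying \eqref{eq1:lemma1} to $\pi^{r\circ c}$ transports all four equalities to \eqref{eq2:lemma1}.

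The routine but most delicate point, which I expect to be the main obstacle, is the uniqueness verification for the pattern shaded at $\boks{0}{1},\boks{1}{1}$: here the forbidden region is the \emph{open value band} $(\pi(b),\pi(a))$ taken over all positions left of $b$, so one must argue that emptiness of this band forces $\pi(a)$ to be the least prefix value exceeding $\pi(b)$, whence $a$ is determined. Checking, uniformly across the four shadings, both that the displayed choice of $a$ fulfils every shaded box and that no other $a$ does, and that admissibility is equivalent to $b$ being an exclusive antirecord, is the crux; the antirecord constraint from $\boks{2}{0}$ together with the non-record consequence of $\pi(a)>\pi(b)$ are exactly what make this bookkeeping go through.
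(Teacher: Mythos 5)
Your proposal is correct and follows essentially the same route as the paper: the shaded box $\boks{2}{0}$ forces the lower dot of each occurrence to be an exclusive antirecord, and the remaining shadings single out the unique partner rook as the \emph{highest}, \emph{lowest}, \emph{farthest} or \emph{nearest} earlier entry exceeding it, giving exactly $\earec(\pi)$ occurrences of each of the four patterns. Your derivation of \eqref{eq2:lemma1} via $\erec(\pi)=\earec(\pi^{\r\circ c})$ is precisely the alternative the paper itself records in the remark following its proof, so nothing in your argument departs from the published one.
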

\begin{proof}
We just prove \eqref{eq1:lemma1} as the proof of \eqref{eq2:lemma1} is similar. In the rook placement representation of a permutation
$\pi\in S_n$ 
the rook $y=(i, \pi(i))$ is an exclusive antirecord iff there is a 
another rook $x=(j,\pi(j))$ at left of $y$, i.e., 
 $j<i$ and higher than $x$, i.e., $\pi(j)>\pi(i)$.
 Hence there are four unique choices for such a rook $x$: 
 the \emph{highest, lowest,  farthest} and \emph{nearest}. This corresponds
 to the four mesh patterns in \eqref{eq1:lemma1}, respectively. 
 \end{proof}
 \begin{remark}
As $\earec(\pi)=\erec(\pi^{\r\circ c})$ for $\pi\in S_n$, we can also derive 
\eqref{eq2:lemma1} from \eqref{eq1:lemma1}. 
 \end{remark}

%


\begin{theorem}\label{main1} There exists an involution $\Phi$ on $S_n$ such that for $\pi\in S_n$,
$$(\pattern{scale=0.5}{2}{1/2,2/1}{0/1,2/0,1/0},  \pattern{scale=0.5}{2}{1/2,2/1}{0/1,0/2,2/0,1/0,1/2},\pattern{scale=0.5}{2}{1/2,2/1}{0/1,1/0,2/0,1/2})\pi=
(\pattern{scale=0.5}{2}{1/2,2/1}{0/2,1/0,2/0},  \pattern{scale=0.5}{2}{1/2,2/1}{0/1,0/2,1/0,2/0,1/1}, \pattern{scale=0.5}{2}{1/2,2/1}{0/2,1/0,2/0,1/1})\Phi(\pi).
$$
\end{theorem}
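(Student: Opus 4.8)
The plan is to first strip the six length-two mesh patterns down to transparent combinatorial statistics, exactly in the spirit of Lemma~\ref{lemma1:main1}, and then to build the involution $\Phi$ so that it visibly interchanges the two shapes that the reduction exposes.

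First I would reduce. Every one of the six patterns contains the two boxes $\boks{1}{0}$ and $\boks{2}{0}$, so in any occurrence at positions $i<j$ with $a=\pi(i)>b=\pi(j)$ these boxes force $b=\min_{k>i}\pi(k)$; hence the right point $j$ and its value $b$ are determined by the left point $i$, and each statistic is a sum over those $i$ for which $\pi(i)$ exceeds the minimum to its right (equivalently, $i$ is not an antirecord). On the left-hand family every pattern additionally carries $\boks{0}{1}$ (``no element left of $i$ has value in $(b,a)$''), whereas every right-hand pattern instead carries $\boks{0}{2}$ (``$a$ is a record''), so the right-hand statistics refine $\erec$. The remaining optional boxes produce the nested refinements: on the left $\boks{1}{2}$ (no element between $i,j$ exceeds $a$) and $\boks{0}{2}$, on the right $\boks{1}{1}$ (no element between $i,j$ has value in $(b,a)$) and $\boks{0}{1}$. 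Reading this off, passing from the $k$-th left pattern to the $k$-th right pattern amounts uniformly to interchanging ``row~$1$'' (values in $(b,a)$) with ``row~$2$'' (values above $a$) in both the left column and the middle column. This duality is the heart of the matter and dictates what $\Phi$ must do.

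Second I would construct $\Phi$ to realize this row-swap globally. The skeleton that must be preserved is the assignment sending $i$ to the position of $\min_{k>i}\pi(k)$, together with the values $a,b$ at the anchors; $\Phi$ should keep this skeleton and only reshuffle the non-anchor elements so that, for each anchor, those with value in $(b,a)$ and those with value above $a$ trade places. I would try to make this precise recursively: insert the largest letter $n$ into the image of the permutation obtained by deleting $n$, at a position dictated by whether the block it governs is of ``row~$1$'' or ``row~$2$'' type, with the insertion rule chosen to be its own inverse; the case $S_3$ (where $\Phi$ fixes $123,132,213,312$ and swaps $231\leftrightarrow 321$) already pins down the base of the recursion. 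An attractive alternative is to pass to a code (a Lehmer-type or restricted-growth encoding read against the running right-to-left minima) on which the two families become manifestly symmetric coordinate counts; then $\Phi$ is simply the coordinate involution swapping those two counts, which is trivially self-inverse, and only the encoding step needs proof. I would pursue whichever route keeps the min-to-right skeleton most visibly intact.

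The main obstacle will be the simultaneous transport of the whole triple. Because the three left statistics (and the three right statistics) are nested, it is not enough that $\Phi$ match the top counts; $\Phi$ must preserve, occurrence by occurrence, the exact level of each anchor---whether it satisfies only the $\boks{0}{1}$ condition, also the middle-column condition, or also the record condition---and send it to the anchor of the image carrying the mirrored level, while creating and destroying no occurrences elsewhere and leaving the partner structure untouched. Controlling the record/antirecord bookkeeping, where the anchor type genuinely changes from ``no left element in $(b,a)$'' to ``$a$ is a record'', under a single global self-inverse map is the delicate point; I expect to settle it by induction on $n$ driven by the insertion of $n$, or by the coordinate computation in the code model, checking at each step that the row-swap neither alters $\min_{k>i}\pi(k)$ nor disturbs the nesting. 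As consistency checks I would confirm the $S_3$ values above and verify that the resulting distribution of the top statistic agrees with $\erec$, whose generating function is the specialization of Theorem~\ref{thm:1.2}.
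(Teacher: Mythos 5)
Your first step is sound and coincides with the paper's own reduction: since all six patterns shade the boxes $(1,0)$ and $(2,0)$, any occurrence $(i,j)$ forces $\pi(j)=\min_{k>i}\pi(k)$, so occurrences are anchored at antirecords, and the two families differ exactly by exchanging the value strip $(b,a)$ with the strip above $a$ in the two left columns; this is the set-up behind Lemma~\ref{lemma2.7:main1}, and your $S_3$ data are consistent with the paper's map (which indeed sends $321\mapsto 231$). The genuine gap is that you never construct $\Phi$. Both routes you propose are left unexecuted, and the difficulty you yourself flag --- performing the row-swap at one anchor without creating or destroying occurrences at the other anchors, while keeping the global map self-inverse --- is the entire mathematical content of the theorem, not a verification to be deferred to an unspecified induction. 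The insertion-of-$n$ recursion is not a plausible repair as stated: the row-swap at an anchor $i_k$ is a simultaneous complementation of all letters exceeding $\pi(i_k)$ to the left of $i_k$, which is not tracked by inserting one letter at a time, and no insertion rule is actually specified. The coding alternative simply posits a bijection in which the two triples become symmetric coordinates, which is equivalent to positing the theorem.

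For comparison, the paper resolves the interference problem with a two-stage local operator at each antirecord: $\varphi_1^{(i_k)}$ complements in place the letters exceeding $\pi(i_k)$ lying to its left, realizing your row-swap and exchanging the two pattern families at the $k$-th anchor (\eqref{eq:phia}), while $\varphi_2^{(i_k)}$ re-complements the letters left of the previous antirecord, restoring that prefix up to order isomorphism so that counts at all other anchors are unchanged (\eqref{eq:phib}--\eqref{eq:phic}). Involutivity of the composite $\Phi$ in \eqref{eq:defPhi} is then not automatic; it follows from the commutation statements of Lemmas~\ref{lem:opera1} and \ref{lemma2.6:main1}, which in turn rest on Lemma~\ref{property:compl}. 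Any completion of your outline would have to supply substitutes for precisely these three lemmas: an explicit local map, a proof that it is invisible at the other anchors, and a commutation argument making the global composition an involution.
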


\begin{corollary}\label{cor:main1}
The triple pattern $(\Nr.3, \Nr.48, \Nr.53)$ is equidistributed with 
$(\erec, \Nr.50, \Nr.54)$ on $S_n$.
\end{corollary}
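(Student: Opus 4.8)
The plan is to read the corollary off from Theorem~\ref{main1} by recognizing each of the six length-two mesh patterns appearing there as a representative of one of the numbered classes (or of the statistic $\erec$) listed in Table~\ref{tab-2}. Since two length-two mesh patterns lying in the same orbit under the symmetries $\pi\mapsto\pi^{-1}$, $\pi\mapsto\pi^{\r}$, $\pi\mapsto\pi^{c}$ carry the same number, it suffices to exhibit, for each of the six patterns, a symmetry identifying it with the corresponding table entry. Theorem~\ref{main1} then asserts, through the involution $\Phi$ and term by term, that the chosen representatives of the left classes and of the right classes are jointly equidistributed on $S_n$, which is precisely the asserted equidistribution of the two numbered triples.

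Concretely, I would first dispose of the two anchors. The first pattern on the right of Theorem~\ref{main1}, $\pattern{scale=0.5}{2}{1/2,2/1}{0/2,1/0,2/0}$, is literally $\erec$ by \eqref{eq2:lemma1}; and the first pattern on the left, $\pattern{scale=0.5}{2}{1/2,2/1}{0/1,2/0,1/0}$, is carried to $\Nr.3$ by $\pi\mapsto(\pi^{-1})^{\r}$, which is exactly one of the identities displayed in the block preceding Lemma~\ref{lemma1:main1}. For the remaining four patterns I would track the shaded boxes under the three generators, using that $\pi\mapsto\pi^{\r}$ sends a box $(i,j)$ to $(2-i,j)$, that $\pi\mapsto\pi^{c}$ sends $(i,j)$ to $(i,2-j)$, and that $\pi\mapsto\pi^{-1}$ sends $(i,j)$ to $(j,i)$ (each also acting on the two dots, so that the base flips $21\leftrightarrow 12$ under $\r$ and under $c$). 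A short computation then shows that the single symmetry $\pi\mapsto(\pi^{-1})^{\r}$ also carries the second and third left-hand patterns to $\Nr.48$ and $\Nr.53$, so the entire left triple is matched uniformly; on the right, the symmetry $\pi\mapsto(\pi^{-1})^{c}$ carries the second and third patterns to $\Nr.50$ and $\Nr.54$.

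With these six class identifications in hand the corollary is immediate: the patterns $A_1,A_2,A_3$ on the left of Theorem~\ref{main1} are representatives of $\Nr.3,\Nr.48,\Nr.53$, the patterns $B_1,B_2,B_3$ on the right are representatives of $\erec,\Nr.50,\Nr.54$, and Theorem~\ref{main1} states that $(A_1,A_2,A_3)$ and $(B_1,B_2,B_3)$ are equidistributed on $S_n$ via $\Phi$. The only real work is the box-by-box verification for the four nontrivial patterns, and the main pitfall is bookkeeping: one must keep the action of $\r$, $c$, and ${}^{-1}$ on the shaded diagrams straight, and be careful about the asymmetry between the two sides — a single symmetry $(\pi^{-1})^{\r}$ handles the whole left triple (so even the Table~\ref{tab-2} representatives are jointly matched there), whereas on the right one matches $\erec$ directly through \eqref{eq2:lemma1} while $\Nr.50$ and $\Nr.54$ are reached only through the inverse--complement symmetry, so that the joint statement is realized by the base-$21$ representatives $B_2,B_3$ supplied by $\Phi$ rather than by the base-$12$ diagrams drawn in the table.
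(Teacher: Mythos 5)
Your proposal is correct and takes essentially the same route as the paper: its equations \eqref{eq:triple} and \eqref{eq:double} carry out exactly your class identifications, transporting the left triple and the pair $(\Nr.50,\Nr.54)$ through the single symmetry $\pi\mapsto(\pi^{-1})^{\r}$ while matching $\erec$ directly via \eqref{eq2:lemma1}, with the same representative-level reading of the joint statement that you flag at the end. Note only that your right-hand map $\pi\mapsto(\pi^{-1})^{c}$ is not a genuinely different symmetry but the inverse bijection of the paper's, since $(\pi^{c})^{-1}=(\pi^{-1})^{\r}$, so the two identifications of $(\Nr.50,\Nr.54)$ with the base-$21$ patterns of Theorem~\ref{main1} coincide.
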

\begin{proof} For any $\pi\in S_n$ we have 
\begin{align}\label{eq:triple}
\begin{bmatrix}
\Nr.3\\
\\
\Nr.48\\
\\
\Nr.53
\end{bmatrix}\pi
=
\begin{bmatrix}
\pattern{scale=0.5}{2}{1/1,2/2}{0/0,0/1,1/2}\\
\\
\pattern{scale=0.5}{2}{1/1,2/2}{ 0/0,0/1,1/2,2/1,2/2}\\
\\
\pattern{scale=0.5}{2}{1/1,2/2}{ 0/0,0/1,1/2,2/1}
\end{bmatrix}\pi
=
\begin{bmatrix}
\pattern{scale=0.5}{2}{1/1,2/2}{0/0,1/0,2/1} \\
\\
\pattern{scale=0.5}{2}{1/1,2/2}{0/0,1/0,1/2,2/1,2/2}\\
\\
\pattern{scale=0.5}{2}{1/1,2/2}{0/0,1/0,2/1,1/2}
\end{bmatrix}\pi^{-1}
=
\begin{bmatrix}
\pattern{scale=0.5}{2}{1/2,2/1}{0/1,2/0,1/0} \\
\\
\pattern{scale=0.5}{2}{1/2,2/1}{ 0/1,0/2,1/0,1/2,2/0}  \\
\\
\pattern{scale=0.5}{2}{1/2,2/1}{ 0/1,1/0,1/2,2/0}
\end{bmatrix}(\pi^{-1})^\r
\end{align}
and 
\begin{align}\label{eq:double}
(\Nr.50, \Nr.54)\pi=
(\pattern{scale=0.5}{2}{1/1,2/2}{ 0/0,0/1,1/1,2/2,1/2},
\pattern{scale=0.5}{2}{1/1,2/2}{ 0/0,0/1,1/1,2/2})\pi
= 
 (\pattern{scale=0.5}{2}{1/1,2/2}{0/0,1/0,1/1,2/1,2/2}, 
 \pattern{scale=0.5}{2}{1/1,2/2}{0/0,1/0,1/1,2/2}) \pi^{-1}
 =
(\pattern{scale=0.5}{2}{1/2,2/1}{ 0/1,0/2,1/0,1/1,2/0},
\pattern{scale=0.5}{2}{1/2,2/1}{ 0/2,1/0,1/1,2/0})(\pi^{-1})^\r.
\end{align}
By Theorem~\ref{main1} the result follows from \eqref{eq2:lemma1}, \eqref{eq:triple} and \eqref{eq:double}.
\end{proof}

\begin{corollary}\label{cor:conj1}
Conjecture~\ref{conj1} holds true.
\end{corollary}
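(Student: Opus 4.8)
The plan is to reduce Conjecture~\ref{conj1} to two results already in hand: the equidistribution packaged in Corollary~\ref{cor:main1} and the Sokal--Zeng S-fraction recorded in Theorem~\ref{thm:1.2}. No new combinatorics is needed; the argument is a specialization followed by a bookkeeping check.

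First I would extract from Corollary~\ref{cor:main1} only its first marginal. Since the \emph{triple} $(\Nr.3,\Nr.48,\Nr.53)$ is equidistributed with $(\erec,\Nr.50,\Nr.54)$ on $S_n$, projecting onto the first coordinate shows that the single statistic $\Nr.3$ is equidistributed with $\erec$ on $S_n$. Consequently
$$\sum_{\pi\in S_n}y^{\Nr.3(\pi)}=\sum_{\pi\in S_n}y^{\erec(\pi)}\qquad\text{for every }n,$$
so that the left-hand side of \eqref{cf:sz} is exactly the specialization $F(1,y,1,1,1;t)$ of the five-variable generating function \eqref{poly:dkz}, where I have set $x=v=q=1$ to kill the $\arec$, $\exc$ and $\inv$ statistics while retaining the $\erec$-variable $y$.

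Next I would invoke Theorem~\ref{thm:1.2}. Its formula \eqref{cf-pattern} already expresses $F(x,y,1,v,q;t)$ as an S-fraction (the slice $z=1$ is precisely the relevant one), so it remains only to substitute $x=v=q=1$ into the coefficients. Setting $q=1$ collapses each geometric sum $x+q+\cdots+q^{k-1}$ into the count $k$ of its terms and each $y+q+\cdots+q^{k-1}$ into $y+(k-1)$; combined with $q^{k-1}=q^{k}=v=1$ this turns $\alpha_{2k-1}=q^{k-1}(x+q+\cdots+q^{k-1})$ into $k$ and $\alpha_{2k}=q^{k}v(y+q+\cdots+q^{k-1})$ into $y+k-1$. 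These are exactly the coefficients demanded in \eqref{cf:sz}, and the identity follows.

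I do not expect a genuine obstacle, since all of the analytic and combinatorial weight has been front-loaded into Theorem~\ref{thm:1.2} and into the involution $\Phi$ underlying Corollary~\ref{cor:main1}. The only points requiring care are the two bookkeeping steps: verifying that the first marginal of an equidistributed triple is itself equidistributed (immediate), and checking that the specialization $x=v=q=1$ lands on exactly $\alpha_{2k-1}=k$ and $\alpha_{2k}=y+k-1$ rather than on some shifted variant, which amounts to correctly counting the number of summands in each truncated geometric series.
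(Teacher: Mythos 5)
Your proposal is correct and follows exactly the paper's own (much terser) argument: project Corollary~\ref{cor:main1} onto its first coordinate to identify the $\Nr.3$-generating function with $F(1,y,1,1,1;t)$, then specialize \eqref{cf-pattern} at $x=v=q=1$ to obtain $\alpha_{2k-1}=k$ and $\alpha_{2k}=y+k-1$. Your explicit bookkeeping of the coefficient specialization is a helpful expansion of what the paper leaves implicit, but it is the same proof.
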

\begin{proof}
By Corollary~\ref{cor:main1} this follows from \eqref{cf-pattern} with $x=v=q=1$. 
\end{proof}
%

As the equidistribution of $\Nr. 48$ and $\Nr. 49$ is known~\cite[Theorem~5.1]{KZ19}, Corollary~\ref{cor:main1} confirms  two conjectured equidistributions in Table~\ref{tab-2}.

\begin{theorem}\label{main2}
 There exist an involution $\Psi$ on $S_n$ such that
 for $\pi\in S_n$,
$$(\pattern{scale=0.5}{2}{1/2,2/1}{1/0,2/0,2/2}, \pattern{scale=0.5}{2}{1/2,2/1}{1/0, 1/1,2/0,2/2},\pattern{scale=0.5}{2}{1/2,2/1}{1/0, 1/1, 1/2,2/0,2/2})(\pi)=(\pattern{scale=0.5}{2}{1/2,2/1}{2/1,1/0,2/0}, \pattern{scale=0.5}{2}{1/2,2/1}{2/1,1/0,2/0,1/2}, \pattern{scale=0.5}{2}{1/2,2/1}{2/1,1/0,2/0,1/1,1/2})
\Psi(\pi)).
$$
\end{theorem}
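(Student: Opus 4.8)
The plan is to first translate all six patterns into the language of antirecords, which is natural because each of them shades the boxes $\boks{1}{0}$ and $\boks{2}{0}$. For an occurrence at positions $p<q$ (the two dots sitting at $(p,\pi(p))$ and $(q,\pi(q))$ with $\pi(p)>\pi(q)$), shading $\boks{2}{0}$ forces $\pi(q)$ to be an antirecord, and together with $\boks{1}{0}$ this forces $\pi(q)=\min\{\pi(k):k\ge p\}$. Writing the antirecords of $\pi$ as $q_1<\dots<q_s=n$ with their necessarily increasing values $m_1<\dots<m_s$ and cutting $[n]$ into the blocks $B_t=(q_{t-1},q_t]$ (with $q_0=0$), an easy argument shows that every entry strictly inside a block exceeds that block's antirecord value; hence the nearest smaller-than-$m_t$ entry to the left of $q_t$ is $q_{t-1}$, and the admissible pairs are exactly $(p,q_t)$ with $p$ in the interior $B_t^{\circ}=\{q_{t-1}+1,\dots,q_t-1\}$. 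For each such $p$ I would record the two tail quantities $M_t=\max\{\pi(k):k>q_t\}$ and $m_{t+1}=\min\{\pi(k):k>q_t\}$, noting that $m_{t+1}$ is precisely the next antirecord value (with the usual conventions $M_s=-\infty$, $m_{s+1}=+\infty$ for the last block).

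With this dictionary the six statistics become transparent. The first pair splits the interior entries by size relative to the tail: $\boks{2}{2}$ (pattern $1^*$) keeps the $p$ with $\pi(p)>M_t$, while $\boks{2}{1}$ (pattern $2^*$) keeps the $p$ with $\pi(p)<m_{t+1}$. The second pair imposes an internal order condition: $\boks{1}{1}$ (pattern $3^*$) further requires $\pi(p)$ to be a right-to-left minimum of $B_t^{\circ}$, whereas $\boks{1}{2}$ (pattern $4^*$) requires $\pi(p)$ to be a right-to-left maximum of $B_t^{\circ}$. The third pair shades the entire middle column, forcing $q=p+1$, so these count the antirecords $q_t$ of non-singleton blocks with $\pi(q_t-1)>M_t$ (left side) or $\pi(q_t-1)<m_{t+1}$ (right side). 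Thus $\Psi$ must simultaneously interchange the roles ``larger than the tail maximum'' $\leftrightarrow$ ``smaller than the tail minimum'' and ``right-to-left minimum'' $\leftrightarrow$ ``right-to-left maximum'', while preserving the antirecord skeleton $q_1<\dots<q_s$ that indexes the blocks.

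I would then define $\Psi$ as an involution that fixes the antirecord positions $\{q_1,\dots,q_s\}$ and reverses the order of the non-antirecord content, since an order-reversing, complementation-type rule is exactly what turns the inequality $\pi(p)>M_t$ into its dual and at the same time exchanges right-to-left minima with right-to-left maxima of the block interiors. The case $n=3$, where $\Psi$ is forced to fix $123$ and $132$ and to swap $213\leftrightarrow 312$ and $231\leftrightarrow 321$, already shows that the rule genuinely moves values across blocks and alters antirecord values, so $\Psi$ cannot be a block-wise reversal; it has to be built globally, most plausibly by a recursion on $n$ (the fixed points in $S_3$ being those with $\pi(1)=1$) or directly on the word formed by the non-antirecord entries. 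Once the explicit rule is fixed, the verification splits into three checks: that the antirecord positions, block sizes and their indexing are preserved; that $\Psi\circ\Psi=\id$; and that the three left-hand counts equal the three right-hand counts, the last reducing block by block to the facts that the order-reversal dualizes the tail comparison and swaps the two kinds of within-block extrema, with the boundary case $p=q_t-1$ yielding the third components.

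The main obstacle is producing a single $\Psi$ that controls all three components at once. The base statistics $1^*/2^*$ compare interior entries against the global tail $M_t$ versus $m_{t+1}$, an inherently cross-block comparison, while the refinements $3^*/4^*$ superimpose a purely local order condition inside $B_t^{\circ}$, and the third components couple the two at the block boundary. Making the complementation and the interior rearrangements commute with the block decomposition — so that the antirecord skeleton survives, the tail comparisons are correctly inverted, and the right-to-left extrema are interchanged, all by one map — is the delicate point, and confirming $\Psi\circ\Psi=\id$ after the necessary cross-block movement of values is where the bookkeeping must be carried out with care.
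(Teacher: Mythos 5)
Your translation of the six patterns into antirecord language is correct and matches how the paper itself reads them (admissible pairs are $(p,q_t)$ with $p$ strictly between consecutive antirecords; the shaded box $\boks{2}{2}$ versus $\boks{2}{1}$ gives the dual tail comparisons $\pi(p)>M_t$ versus $\pi(p)<m_{t+1}$; the extra box $\boks{1}{1}$ versus $\boks{1}{2}$ gives the within-block right-to-left minimum/maximum condition; the fully shaded middle column forces $q=p+1$). Your $S_3$ computations and the observation that $\Psi$ must preserve antirecord \emph{positions} while possibly changing antirecord \emph{values} are also correct. But this is where the proposal stops: you never define $\Psi$. You state the properties any candidate must have, observe that a block-wise reversal fails, and then defer the construction (``once the explicit rule is fixed, the verification splits into three checks''), explicitly flagging the construction and the check $\Psi\circ\Psi=\id$ as the delicate open points. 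Those points are the entire content of the theorem, so the proposal has a genuine gap: it is a correct reduction plus a list of desiderata, not a proof.

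The idea you are missing is \emph{locality}: rather than one global complementation juggling all blocks simultaneously, the paper attaches to each antirecord position $i_k$ a map $\psi^{(i_k)}=\psi_2^{(i_k)}\circ\psi_1^{(i_k)}$, where $\psi_1^{(i_k)}$ complements (within its own alphabet) the subword of letters greater than $\pi(i_k)$ lying to the right of $\pi(i_{k-1})$, and $\psi_2^{(i_k)}$ complements the subword of letters greater than $\pi(i_k)$ lying to the right of $\pi(i_k)$. The first complementation exchanges the level-$k$ triple of counts (it dualizes the tail comparison and swaps the two kinds of within-block extrema, including the adjacent case $p=i_k-1$ for the third components); the second undoes the damage to the tail up to order isomorphism, so that the counts at every level $r\neq k$ are untouched (this is the paper's Lemma~\ref{lem:opera2bis}, resting on Lemma~\ref{property:compl}). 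Setting $\Psi=\psi^{(i_l)}\circ\cdots\circ\psi^{(i_1)}$, involutivity does not require any global bookkeeping of the kind you worry about: each $\psi^{(i_k)}$ is an involution preserving the antirecord positions, and distinct $\psi^{(i_k)}$ commute (Lemmas~\ref{lem:opera2} and~\ref{lemma2:main2}), so $\Psi^{-1}$, which is the same factors composed in reverse order, equals $\Psi$. Your instinct toward a recursion on $n$ or a single complementation of the non-antirecord word would have to recreate exactly this cross-block correction by hand, and as your own $213\leftrightarrow 312$ example shows, any rule fixing all antirecord entries is already ruled out; without the two-step local complementation (or an equivalent device) the plan cannot be completed as stated.
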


For the patterns $\Nr.23$ and $\Nr.24$, we have
\begin{align*}
(\Nr.23, \Nr.24)\pi=
(\pattern{scale=0.5}{2}{1/1,2/2}{1/0, 1/1, 1/2,0/0,0/2},
\pattern{scale=0.5}{2}{1/1,2/2}{0/1,1/0,0/0,1/1,1/2})
\pi=(\pattern{scale=0.5}{2}{1/2,2/1}{1/0, 1/1, 1/2,2/0,2/2},
\pattern{scale=0.5}{2}{1/2,2/1}{2/1,1/0,2/0,1/1,1/2})
\pi^r.
\end{align*}
By Theorem~\ref{main2}, we confirm another conjecture in Table~\ref{tab-2}, i.e., the patterns $\Nr.23$ and $\Nr.24$ are equidistributed.

%

We shall prove Theorem~\ref{main1} and Theorem~\ref{main2}
in Section~2 and Section~3, respectively, and make a connection  between
  pattern Nr. 14 and the statistic \emph{succession}  in permutations in Section~4.

\section{Proof of Theorem~\ref{main1}}

For $\pi\in S_n$ let $\AREC(\pi)=(i_1, i_2, \ldots, i_l)$ be the sequence of antirecord positions of $\pi$ from left to right. So 
$\pi(i_1)=1$, $i_1<\cdots <i_l$ and $i_l=n$.
For each antirecord position $i_k$ 
 define two mappings
 \begin{subequations}\label{eq:defanti1}
\begin{align}
&\varphi_1^{(i_k)}: \pi\mapsto \pi'\\ 
&\varphi_2^{(i_k)}: \pi\mapsto \pi''
\end{align}
\end{subequations}
as follows: 
\begin{itemize}
\item let  $w=w_1\ldots w_r$  be  the subword of $\pi$ 
consisting of letters  greater than $\pi(i_k)$ on the left of $\pi(i_k)$ (resp. $\pi(i_{k-1})$);
\item let 
$w'=w_1'\ldots w_r'$   be the word obtained by substituting 
the $j$th largest letter  with  the $j$th smallest letter in $w$
for $j=1,\ldots, r$;
\item  let $\pi'$ (resp. $\pi''$) be the word obtained by 
replacing $w_j$  with  $w_j'$ for $j=1,\ldots, r$ 
 in $\pi$. 
\end{itemize}

\begin{remark}
By convention, we define $\varphi_2^{(i_1)}$ to be the identity mapping.
Clearly  the two operations keep the sequence of 
 antirecords for both values and positions, that is,
 \begin{subequations}
 \begin{align}
 \AREC(\pi)&=\AREC(\pi')=\AREC(\pi'')\\
\pi'(i_k)&=\pi''(i_k)=\pi(i_k)\quad \text{for} \quad k=1, \ldots l.
\end{align}
\end{subequations}
\end{remark}

Let $P=\{p_1<\cdots <p_r\}$ and $Q=\{q_1<\cdots <q_r\}$ be two ordered sets and 
 $\pi=p_{1}\ldots p_{r}$ and $\tau=q_{1}\ldots q_{r}$ are permutations of $P$ and $Q$, respectively.  We say that $\pi$ and $\tau$ are  \emph{order isomorphic} and write $\pi\sim \tau$ 
   if for any two  indices $r$ and $s$ we have the equivalence 
 $p_r<p_s$ if and only if $q_r<q_s$. In other words, $\tau$ is the
  permutation obtained from $\pi$ by substituting $p_i$ by $q_i$ for $i=1, \ldots, r$.

Let $w=w_1\ldots w_n$ be a permutation of $a_1<a_2<\cdots<a_n$. We define
the \emph{complement} of $w$ by $w^c$\footnote[1]{When $a_i=i$, $w^c$ reduces to $\pi^c$, see \eqref{def:comlpi}.}, which is the word obtained by substituting $a_i$ by $a_{n+1-i}$ in $w$ for $i=1, \ldots, n$. 
If $x$ is a subset of  the letters in $w$, we write $[w]_x$ as the subword of $w$ consisting of the letters $a\in x$. 
\begin{lemma}\label{property:compl}
\begin{enumerate}
\item If $w=w_1w_2$ and $w^c=w_1'w_2'$, then $(w_1')^c\sim w_1$.\footnote[2]{The word $w_1'$ is the complement of $w_1$ in the alphabet of $w$, while $(w_1')^c$ is the complement of $w_1'$ in the alphabet of $w_1'$.}
\item  Let  $w=w_1w_2w_3$ and  $v=v_1v_2v_3$ with $|w_1|=|v_1|$. If $w_1w_2\sim v_1v_2$ with
$(w_1w_2)^c=w_1'w_2'$ and $(v_1v_2)^c=v_1'v_2'$,  then $w_1\sim v_1$, $w_2\sim v_2$, $w_1'\sim v_1'$ and $w_2'\sim v_2'$. 
Moreover,
we have    
$(w'_1)^c\sim (v'_1)^c$ and $(w'_2)^c\sim (v'_2)^c$.
\item  If $w\sim v$ and $[w]_{x} =[v]_{x}$ for some set $x$ of some common 
letters in $w$ and $v$, then 
\begin{itemize}
\item $w^c\sim v^c$ and $[w^c]_{x}=[v^c]_{x}$.
\item $[w]_{y}\sim [v]_{z}$, where $y$ (resp. $z$) is the complementary of $x$ in the alphabet of $w$ (resp. $v$).
\end{itemize}
\end{enumerate}
\end{lemma}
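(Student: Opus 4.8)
The plan is to verify each assertion in Lemma~\ref{property:compl} directly from the definition of complement, treating the three parts as a sequence of elementary but careful bookkeeping arguments about rank functions. The key observation throughout is that the complement operation $w \mapsto w^c$ on a word over an alphabet $A=\{a_1<\cdots<a_n\}$ is precisely the order-reversing bijection $a_i \mapsto a_{n+1-i}$ applied letter-by-letter; equivalently, if we write $\rank_A(a)$ for the position of $a$ in the increasing listing of $A$, then $w^c$ replaces each letter $a$ by the unique $b\in A$ with $\rank_A(b)=n+1-\rank_A(a)$. I would first record this reformulation, since every part reduces to comparing ranks before and after complementation, and order-isomorphism $\pi\sim\tau$ is exactly the statement that $\pi$ and $\tau$ induce the same rank sequence.

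For part~(1), the subtlety flagged in the footnote is that $w_1'$ is the complement of $w_1$ taken in the full alphabet of $w$, whereas $(w_1')^c$ re-complements $w_1'$ only within its own (smaller) alphabet. I would argue that composing ``complement in $A$, restrict to the $w_1$-block, then complement in the restricted alphabet'' returns a word order-isomorphic to $w_1$: the first complement reverses all $A$-ranks, passing to the sub-alphabet of $w_1'$ preserves relative order, and the second complement reverses ranks again within that sub-alphabet, so the two reversals cancel at the level of relative order, giving $(w_1')^c\sim w_1$. The cleanest formulation is to track only the relative order of the letters occupying the positions of $w_1$, which is invariant under the two cancelling reversals.

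For part~(2), I would exploit that $w_1w_2\sim v_1v_2$ means these two words have identical rank sequences on a common length; hence any contiguous sub-block or any operation defined purely in terms of relative order transports identically. Thus $w_1\sim v_1$ and $w_2\sim v_2$ follow by restricting to the first $|w_1|$ positions and the complementary positions, and $w_1'\sim v_1'$, $w_2'\sim v_2'$ follow because complementation is itself defined by ranks, so applying it to order-isomorphic words yields order-isomorphic results, after which restriction to blocks preserves the isomorphism. The final clause $(w_1')^c\sim(v_1')^c$ and $(w_2')^c\sim(v_2')^c$ is then part~(1) applied in parallel to the isomorphic pairs $(w_1w_2,v_1v_2)$, or alternatively a direct reapplication of the rank-invariance principle. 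For part~(3), the hypotheses $w\sim v$ together with $[w]_x=[v]_x$ (equality as words on the common sub-alphabet $x$) give that $w^c\sim v^c$ by the rank-preservation of complement, and $[w^c]_x=[v^c]_x$ because complementing then extracting the $x$-letters agrees with extracting and complementing within $x$; the statement $[w]_y\sim[v]_z$ on the complementary alphabets follows since $w\sim v$ forces the non-$x$ letters to occupy matching relative positions.

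The main obstacle, and the only place where care is genuinely required, is keeping scrupulously straight \emph{which alphabet} each complement is taken over—the footnotes in the statement exist precisely because the complement in part~(1) switches between the ambient alphabet of $w$ and the intrinsic alphabet of the block $w_1'$. I would therefore fix, once and for all, explicit rank notation $\rank_A(\cdot)$ indexed by the alphabet $A$ in play, and phrase every claim as an equality or reversal of rank sequences; with that notation in place the three parts become short unwindings of the identity $\rank_{A}(a^{c}) = |A|+1-\rank_A(a)$ and its behaviour under restriction to sub-alphabets. No nontrivial combinatorial input beyond these rank identities is needed.
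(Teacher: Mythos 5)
Your treatment of parts (1) and (2) is correct: the ``two rank-reversals cancel'' argument for (1) and the observation that restriction to a fixed block of positions and complementation both preserve order-isomorphism for (2) are exactly the right elementary ingredients. Note that the paper itself gives no proof of this lemma (it is ``left to the reader''), so there is no printed argument to match; the comparison point is simply whether your sketch is watertight.

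It is not, in part (3), at two specific steps. First, your claimed commutation ``complementing then extracting the $x$-letters agrees with extracting and complementing within $x$'' is false: writing $c$ for the complement involution on the alphabet of $w$, the $x$-letters of $w^c$ occupy the positions of the $c(x)$-letters of $w$, so $[w^c]_x = c([w]_{c(x)})$ applied letterwise, which is in general not the within-$x$ complement of $[w]_x$. Concretely, for $w=2\,1\,3$ on $\{1,2,3\}$ and $x=\{1,2\}$ one has $[w^c]_x=[2\,3\,1]_x=2\,1$, whereas complementing $[w]_x=2\,1$ inside $\{1,2\}$ gives $1\,2$. Second, your justification of the last bullet --- ``$w\sim v$ forces the non-$x$ letters to occupy matching relative positions'' --- is not true and cannot carry the claim: take $w=2\,3\,1$ on $\{1,2,3\}$, $v=3\,4\,2$ on $\{2,3,4\}$, $x=\{2\}$; then $w\sim v$ and $[w]_x=[v]_x=2$ as letter sequences, yet $[w]_y=3\,1\not\sim 3\,4=[v]_z$. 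This example shows the hypothesis $[w]_x=[v]_x$ must be read \emph{positionally} (the $x$-letters occur at the same positions in $w$ and in $v$), which is how it arises in every application in the paper. With that reading the proof goes through cleanly along your rank-function lines: letting $\theta\colon A_w\to A_v$ be the rank-preserving bijection with $v=\theta(w)$, positional equality forces $\theta(a)=a$ for every $a\in x$, hence $\theta^{-1}(x)=x$ and $z=\theta(y)$; since $\theta$ intertwines the two alphabet complements, $v^c=\theta(w^c)$, which yields $w^c\sim v^c$, then $[v^c]_x=\theta([w^c]_{\theta^{-1}(x)})=[w^c]_x$, and finally $[v]_z=\theta([w]_y)\sim[w]_y$. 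Pinning down which alphabet each complement lives in and the positional reading of subword equality is precisely the care your final paragraph promises, but your sketch of (3) does not execute it --- and as stated both of its key steps fail.
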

\begin{proof}
The verification is easy and left to the reader.
\end{proof}

For example, 
if $w=3\,5\,9\,1\,4\,7\,2\,8\,6$, then $w^c=7\,5\,1\,9\,6\,3\,8\,2\,4$. Let $w=w_1w_2$ with 
$w_1=3\,5\,9\,1\,4\,7$ and $w_2=2\,8\,6$, then $w_1'=7\,5\,1\,9\,6\,3$ and $(w_1')^c=3\,6\,9\,1\,5\,7$.
Clearly $(w_1')^c\sim w_1$ and $[(w_1')^c]_x=[w_1]_x$ with
$x=\{1,\,3, \,9\}$.  We see that $w_1^c=7\,4\,1\,9\,5\,3$ and $[w_1']_x=[w_1^c]_x=1\,9\,3$.

\begin{lemma}\label{lem:opera1}
For any antirecord position $i$ of $\pi\in S_n$ 
the mappings $\varphi_1^{(i)}$  and  $\varphi_2^{(i)}$ are involutions and 
commute, namely,
\begin{equation}\label{eq1:lem2.5}
\varphi_1^{(i)}\circ \varphi_1^{(i)}(\pi)=
\varphi_2^{(i)}\circ \varphi_2^{(i)}(\pi)=\pi
\end{equation}
and 
\begin{equation}\label{eq2:lem2.5}
\varphi_2^{(i)}\circ \varphi_1^{(i)}(\pi)=\varphi_1^{(i)}\circ \varphi_2^{(i)}(\pi).
\end{equation}
\end{lemma}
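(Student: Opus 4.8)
The plan is to reduce both assertions to one clean description of the maps together with the elementary fact that complementing a word inside its own alphabet is the letterwise action of the order-reversing bijection of that alphabet. Fix an antirecord position $i=i_k$; by the convention in the Remark $\varphi_2^{(i_1)}$ is the identity, so I may assume $k\ge 2$. First I would record the structural form of the maps: $\varphi_1^{(i_k)}$ selects the subword $w$ of $\pi$ formed by the letters exceeding $\pi(i_k)$ that occupy the set $A$ of positions $<i_k$, and replaces $w$ by its complement $w^c$ in the alphabet $V_A$ of those letters, while $\varphi_2^{(i_k)}$ does the same with the position cutoff $i_{k-1}$ in place of $i_k$ (the value threshold $\pi(i_k)$ being unchanged). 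The decisive observation is that $w\mapsto w^c$ only permutes the set $V_A$ among the selected positions: it changes neither the set of occupied values nor $\pi(i_k)$, and since every selected letter exceeds $\pi(i_k)$ while $\pi(i_{k-1})<\pi(i_k)$, neither the selected set of positions nor $\AREC(\pi)$ is disturbed.

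With this in hand the involution \eqref{eq1:lem2.5} is immediate. After one application of $\varphi_1^{(i_k)}$, the positions $<i_k$ carrying a value exceeding $\pi(i_k)$ are still exactly the positions of $A$, now carrying the word $w^c$ on the same alphabet $V_A$; hence a second application complements $w^c$ in $V_A$ and returns $(w^c)^c=w$. The identical argument, with the cutoff $i_{k-1}$, gives $\varphi_2^{(i)}\circ\varphi_2^{(i)}=\id$.

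For the commutation \eqref{eq2:lem2.5} I would use that both maps act at the same antirecord and on nested position sets. Let $A$ be as above, let $B\subseteq A$ consist of those positions that are moreover $<i_{k-1}$, and put $C=A\setminus B$; since every position of $B$ precedes every position of $C$, the subword of $\pi$ on $A$ factors as $w=uv$ with $u$ on $B$ and $v$ on $C$. Then $\varphi_1^{(i_k)}$ sends $w$ to $w^c=u'v'$ and $\varphi_2^{(i_k)}$ subsequently replaces the $B$-block $u'$ by its complement $(u')^c$ in the alphabet of $u'$; in the other order, $\varphi_2^{(i_k)}$ first sends $u$ to $u^c$ and then $\varphi_1^{(i_k)}$ complements $u^cv$ in $V_A$. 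Thus commutativity is the word identity $(u')^c v'=(u^cv)^c$. Writing $\rho$ for the order-reversing bijection of $V_A$, one has $u'=\rho(u)$ and $v'=\rho(v)$, so $(u^cv)^c=\rho(u^c)\rho(v)=\rho(u^c)v'$; the $C$-blocks match automatically, and the $B$-block reduces to $(\rho(u))^c=\rho(u^c)$, i.e.\ to the fact that complementation commutes with the order-reversing relabeling $\rho$. This last identity holds because conjugating the order-reversing involution of the alphabet of $u$ by the order-reversing bijection $\rho$ onto the alphabet of $u'$ yields precisely the order-reversing involution of the latter; this is the order-isomorphism content recorded in Lemma~\ref{property:compl}, with part~(1) giving $(u')^c\sim u$ and part~(2) furnishing the compatibility of the complemented blocks that promotes this order-isomorphism to an equality of words on $V_A$.

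The routine parts here are the structural description of the maps and the involution. The main obstacle is the commutativity: one must track how the alphabet of the $B$-block changes under the first operation and verify that complementing in the small alphabet after complementing in $V_A$ yields the same letters as the opposite order. This is exactly the purpose of Lemma~\ref{property:compl}, and the care lies in carrying the order-isomorphisms through while matching the common letters, so that order-isomorphic blocks living on the same alphabet are genuinely equal.
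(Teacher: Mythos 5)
Your proof is correct and takes essentially the same route as the paper's: the paper checks the involution property directly from the definitions and gets commutativity from the order-isomorphism $\varphi_2^{(i)}\circ\varphi_1^{(i)}(\pi)\sim\varphi_1^{(i)}\circ\varphi_2^{(i)}(\pi)$ together with the invariance of the set of letters greater than $\pi(i)$ to the left of $\pi(i)$, which is precisely what your block decomposition $w=uv$ and the conjugation identity $(\rho(u))^c=\rho(u^c)$ make explicit. Your write-up simply fills in the verifications that the paper's two-sentence proof declares ``easy to check.''
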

\begin{proof}
From the definitions of $\varphi_1^{(i)}$ and $\varphi_2^{(i)}$ in Eq.~\eqref{eq:defanti1}, it is easy to check Eq.~\eqref{eq1:lem2.5} holds and 
$$
\varphi_2^{(i)}\circ \varphi_1^{(i)}(\pi) \sim \varphi_1^{(i)}\circ \varphi_2^{(i)}(\pi).
$$
Since the set of letters greater than $\pi(i)$ on the left of $\pi(i)$ are invariant under the operation $\varphi_1^{(i)}$ and  $\varphi_2^{(i)}$ on $\pi$, we obtain Eq.\eqref{eq2:lem2.5} immediately. 
\end{proof}

 Let  $\pi\in S_n$ with  sequence of antirecord positions 
$\AREC(\pi)=(i_1,i_2,\ldots, i_l).$ 
 We define the operation  $\Phi$ on $\pi$ by 
 \begin{align}\label{eq:defPhi}
 \Phi(\pi)=\varphi^{(i_1)}\circ\varphi^{(i_2)}\circ\cdots\circ
 \varphi^{(i_l)}(\pi)
 \end{align}
 with $\varphi^{(i_k)}=\varphi_2^{(i_k)}\circ \varphi_1^{(i_k)}$
for $k=1, \ldots, l$.

\begin{figure}[t]
	\begin{center}
		\begin{tikzpicture}[scale=1.2, baseline=(current bounding box.center)]
		\draw (0.01,0.01) grid (3+0.99,3+0.99);
		\filldraw (3,3) circle (3pt) node[below left] {$i_k$};		
		\filldraw (2,2) circle (3pt) node[below left] {$i_{k-1}$};
		\filldraw (1,1) circle (3pt) node[below left] {$i_{k-2}$};
		\draw (0.5,2.5) node{$u_2$};
		\draw (0.5,3.5) node{$u_1$};	
		\draw (1.5,2.5) node{$v_2$};
		\draw (1.5,3.5) node{$v_1$};
                 \draw (2.5,3.5) node{$w$};	
\end{tikzpicture}
		\end{center}
		\medskip
	\caption{The decomposition of three consecutive  anti-records of $\pi$}\label{Fig:1}
\end{figure}
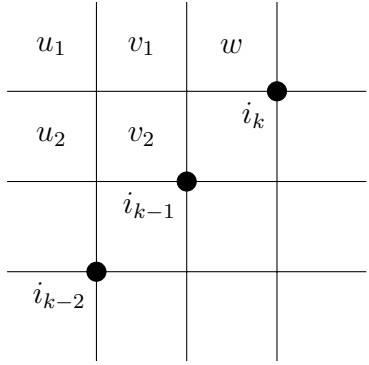

\begin{lemma}\label{lemma2.6:main1} 
For $\pi\in S_n$ with $\AREC(\pi)=\{i_1, \ldots, i_l\}$. The mappings  
$g:=\varphi^{(i_{k-1})}$ and $f:=\varphi^{(i_k)}$ commute, i.e., 
$$
g\circ f(\pi)=f\circ g(\pi).
$$
\end{lemma}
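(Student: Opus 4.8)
The plan is to reduce the identity to the local picture of Figure~\ref{Fig:1} and then check it letter by letter. First I would record which letters each map moves. Writing $T$ for the band of values exceeding $\pi(i_k)$ and $M$ for the band of values strictly between $\pi(i_{k-1})$ and $\pi(i_k)$, the map $f=\varphi^{(i_k)}$ only complements $T$-letters, while $g=\varphi^{(i_{k-1})}$ complements $(T\cup M)$-letters; moreover $f$ touches only positions left of $i_k$ (columns $1,2,3$) and $g$ only positions left of $i_{k-1}$ (columns $1,2$). Consequently every letter outside the five regions $u_1,u_2,v_1,v_2,w$ of Figure~\ref{Fig:1}, namely the $T$-letters in columns $1,2,3$ and the $M$-letters in columns $1,2$, is fixed by both $f$ and $g$, as are the antirecord positions and values by the Remark following \eqref{eq:defanti1}. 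Using Lemma~\ref{property:compl}(3) to pass freely between a word and its order-isomorphism type, it therefore suffices to prove $g\circ f=f\circ g$ on the pattern formed by these active letters.

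The decisive structural observation is that $f$ preserves the \emph{shape}, that is, the set of positions carrying $T$-letters: since $\varphi_1^{(i_k)}$ and $\varphi_2^{(i_k)}$ both complement within the $T$-alphabet, a $T$-position stays a $T$-position. By contrast $g$, complementing $C_1C_2$ (the $(T\cup M)$-words of columns $1$ and $2$) and then $C_1$, may interchange $T$- and $M$-letters among the positions of a column, hence alters the shape. Because $f$ fixes the shape while $g$ alters it exactly once in either composition, both $g\circ f$ and $f\circ g$ terminate with the \emph{same} shape, and it then remains only to match the values. Writing $f$ as the pair of complementations $D_1D_2D_3\mapsto (D_1D_2D_3)^c$ (the $T$-subword in columns $1,2,3$) followed by complementing its columns-$1,2$ part, and $g$ as $C_1C_2\mapsto (C_1C_2)^c$ followed by complementing $C_1$, I would carry out both orders of composition one complementation at a time and compare the resulting words on the fixed value sets. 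The commuting-involution property of Lemma~\ref{lem:opera1} reduces the bookkeeping inside each of $f$ and $g$.

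The matching of values is where Lemma~\ref{property:compl} does the real work. Part~(1) guarantees that the far (column-$1$) part is returned to a word order-isomorphic to its input after each of $f$ and $g$, so the recursion closes. Part~(3), applied with $x$ the set of $T$-values, controls the interaction between band-restriction and complementation, expressing the $T$-subword of a complemented $(T\cup M)$-word through the complement of its $T$-subword; this is exactly what shows that $g$'s band-mixing feeds the subsequent $f$ the same arrangement in both orders. Finally part~(2), applied to the column-$1$/column-$2$ split, transfers these order-isomorphisms across the split so that the two composites agree not merely up to isomorphism but as genuine words. I expect the main obstacle to be precisely this last point: because $g$ moves the very $T$-letters on which $f$ later acts to new positions, one must verify that the values delivered to those positions are independent of the order of application, and it is the combination of Lemma~\ref{lem:opera1} with Lemma~\ref{property:compl}(2)--(3) that ultimately forces the two orders to coincide.
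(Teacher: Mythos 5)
Your plan follows the same route as the paper's actual proof: localize to the regions $u_1,u_2,v_1,v_2,w$ of Figure~\ref{Fig:1}, carry out the two composites one complementation at a time, and use Lemma~\ref{property:compl} to upgrade order-isomorphism plus value-set bookkeeping to genuine word equality. But the step you yourself call decisive contains a real gap: from ``$f$ preserves the shape and $g$ alters it exactly once in either composition'' it does \emph{not} follow that the two composites terminate with the same shape. The shape $g$ produces is not a function of the input shape alone: after complementing the $(T\cup M)$-word $C_1C_2$ of columns $1,2$, the positions carrying $T$-letters are exactly those that carried the $t$ smallest letters of $C_1C_2$ (where $t$ is the number of $T$-letters in columns $1,2$), and this position set depends on the whole arrangement of the word, which differs between the two inputs $\pi$ and $f(\pi)$; applying a shape-changing map ``once'' to two different words can perfectly well yield two different shapes. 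What actually closes this step is the paper's first fact: the double complementation inside $f$ returns the columns-$1,2$ word to one order-isomorphic to the original ($u'v'\sim uv$, with the $M$-subword pointwise fixed and the $T$-positions unchanged), so the positions of the $t$ smallest letters --- and hence $g$'s shape outcome --- coincide in the two orders. You have this ingredient available through Lemma~\ref{property:compl}(1), but you never wire it into the shape claim; as written, the claim is asserted rather than derived.

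Two further points. First, you invoke Lemma~\ref{property:compl}(3) ``with $x$ the set of $T$-values,'' but the hypothesis of part (3) is $[w]_x=[v]_x$, literal equality of the $x$-subwords, which fails for $x=T$ precisely because $f$ changes the $T$-letters; the correct instantiation is $x=M$ (the letters $f$ fixes), whose second bullet then yields the order-isomorphism of the $T$-parts --- this is how the paper obtains $\tilde u_2\tilde v_2=u''_2v''_2=\hat u_2\hat v_2$ and then $\hat u_1\hat v_1\sim \tilde u_1\tilde v_1$. Second, part (2) alone cannot make the two composites ``agree as genuine words'': order-isomorphic words are equal only once their letter sets are known to coincide, and establishing $\{\tilde u_1\tilde v_1\}=\{\hat u_1\hat v_1\}$ (the paper's fact (5), which follows because every complementation preserves its alphabet, so the set of $T$-values that $f$ distributes over columns $1,2,3$ is the same in both orders, as is the column-$3$ word $w'$) is exactly the verification your last paragraph defers. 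So the skeleton matches the paper, but the two load-bearing verifications --- same final shape and same $T$-value sets --- are missing rather than routine, and without them the argument does not yet prove $g\circ f(\pi)=f\circ g(\pi)$.
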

\begin{proof}
 We write the permutation  $\pi=\pi(1)\ldots \pi(n)$ as 
$\pi=u\pi(i_{k-2})v\pi(i_{k-1})w\pi(i_{k})x$ for $3\leq k\leq l$, and 
\begin{itemize}
\item[(i)] $u_1$ (resp. $v_1$,  $w$) as  the subword consisting of letters greater than $\pi(i_k)$ in $u$ (resp. $v$, $w$);
\item[(ii)] $u_2$ (resp. $v_2$) as  the subword consisting of letters between 
 $\pi(i_{k-1})$  and $\pi(i_k)$  in $u$ (resp. $v$); 
\end{itemize}
 see    Figure~\ref{Fig:1}.
For convenience, we introduce the following notations:
\begin{subequations}
\begin{align}
f(\pi):&=u'\pi(i_{k-2})v'\pi(i_{k-1})w'\pi(i_{k})x\label{eq:f}\\
g\circ f (\pi):&=\tilde u\pi(i_{k-2})\tilde v\pi(i_{k-1})w'\pi(i_{k})x,\label{eq:gf}\\
g(\pi):&=u''\pi(i_{k-2})v''\pi(i_{k-1})w\pi(i_{k})x,\label{eq:g}\\
f\circ g (\pi):&=\hat u\pi(i_{k-2})\hat v\pi(i_{k-1})w'\pi(i_{k})x.\label{eq:fg}
\end{align}
\end{subequations}
We will use the similar notations $u_i', \hat{u}_i, \tilde{u_i}, u_i'', v_i', \hat{v}_i, \tilde{v_i}, v_i''$ as in (i) and (ii) for $i=1, 2$.

By the definition of  $f$ and $g$ and Lemma~\ref{property:compl}, we have the following facts,
\begin{enumerate}
\item Under the operation $f$ (cf. \eqref{eq:f}), as $u_1v_1\sim u_1'v_1'$, $u_2'v_2'=u_2v_2$, we have $u'v'\sim uv$;
\item Applying $g$ to  $f(\pi)$ and $\pi$ (cf. \eqref{eq:gf} and \eqref{eq:g}), respectively,   we see that $\tilde u\tilde v\sim u''v''$  by (1) and Lemma~\ref{property:compl}, hence
$\tilde u_2\tilde v_2=u''_2v''_2$;
\item Applying $f$ to $g(\pi)$ (cf. \eqref{eq:g} and \eqref{eq:fg}),  we have  $u_2''v_2''=\hat u_2\hat v_2$, 
 combining  with  (2) yields 
  $\hat u_2\hat v_2=\tilde u_2\tilde v_2$, that is $\hat u_2=\tilde u_2$ and 
  $\hat v_2=\tilde v_2$;
 \item Applying $f$ to $g(\pi)$ (cf. \eqref{eq:g} and \eqref{eq:fg}),  using Lemma~\ref{property:compl}
 we have   $\hat u\hat v\sim u''v''$, from $u''v''\sim \tilde u\tilde v$ (cf. (2)) we derive 
 $\hat u\hat v\sim \tilde u\tilde v$.  Combining with Lemma~\ref{property:compl} and    (3) yields
 $\hat u_1\hat v_1\sim \tilde u_1\tilde v_1$.  
 \item 
 Let $\{w\}$ denote the set of letters in $w$.  
 \begin{itemize}
\item                        Applying $f$ to $\pi$ we have 
 $\{u_1'v_1'w'\}=\{ u_1  v_1 w\}$,
 \item applying $g$ to $f(\pi)$ we have 
 $\{u_1'v_1'w'\}=\{\tilde u_1\tilde v_1w'\}$, 
  \item applying $g$ to $\pi$ we have 
 $\{u_1''v_1''w\}=\{u_1 v_1 w\}$,
 \item applying $f$ to $g(\pi)$ we have 
 $\{u_1''v_1''w\}=\{\hat u_1\hat  v_1 w'\}$.
\end{itemize}
 Thus  $\{\tilde u_1\tilde v_1\}=\{\hat u_1\hat  v_1\}$. It follows from (4) that 
 $\hat u_1= \tilde u_1$ (resp. $\hat v_1=\tilde v_1$).
 \end{enumerate}
 Summarizing the above facts we have proved $f\circ g=g\circ f$.
 \end{proof}

\begin{lemma}\label{lemma2.7:main1} The mapping  $\varphi^{(i_k)}$ is an involution   such that for $\pi\in S_n$ and $r\neq k$,
\begin{subequations}
\begin{align}\label{eq:phia}
(\pattern{scale=0.5}{2}{1/2,2/1}{0/2,1/0,2/0}, \pattern{scale=0.5}{2}{1/2,2/1}{0/2,1/0,2/0,1/1}, 
\pattern{scale=0.5}{2}{1/2,2/1}{0/1,0/2,1/0,2/0,1/1})_k\;\pi
=(\pattern{scale=0.5}{2}{1/2,2/1}{0/1,2/0,1/0}, \pattern{scale=0.5}{2}{1/2,2/1}{0/1,1/0,2/0,1/2}, \pattern{scale=0.5}{2}{1/2,2/1}{0/1,0/2,2/0,1/0,1/2})_k\;\varphi^{(i_k)}(\pi),\\
(\pattern{scale=0.5}{2}{1/2,2/1}{0/1,2/0,1/0}, \pattern{scale=0.5}{2}{1/2,2/1}{0/1,1/0,2/0,1/2}, \pattern{scale=0.5}{2}{1/2,2/1}{0/1,0/2,2/0,1/0,1/2})_r\;\pi
=
(\pattern{scale=0.5}{2}{1/2,2/1}{0/1,2/0,1/0}, \pattern{scale=0.5}{2}{1/2,2/1}{0/1,1/0,2/0,1/2}, \pattern{scale=0.5}{2}{1/2,2/1}{0/1,0/2,2/0,1/0,1/2})_r\;\varphi^{(i_k)}(\pi),\label{eq:phib}\\
(\pattern{scale=0.5}{2}{1/2,2/1}{0/2,1/0,2/0}, \pattern{scale=0.5}{2}{1/2,2/1}{0/2,1/0,2/0,1/1}, 
\pattern{scale=0.5}{2}{1/2,2/1}{0/1,0/2,1/0,2/0,1/1})_r\;\pi
=
(\pattern{scale=0.5}{2}{1/2,2/1}{0/2,1/0,2/0}, \pattern{scale=0.5}{2}{1/2,2/1}{0/2,1/0,2/0,1/1}, 
\pattern{scale=0.5}{2}{1/2,2/1}{0/1,0/2,1/0,2/0,1/1})_r\;\varphi^{(i_k)}(\pi),
\label{eq:phic}
\end{align}
\end{subequations}
where $(\textrm{pattern})_k$ means the number of the patterns between $\pi(i_{k-1})$ and $\pi(i_k)$.
\end{lemma}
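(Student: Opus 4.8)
The plan is to dispatch the involution claim first and then treat the three displayed identities by analyzing the action of $\varphi^{(i_k)}$ letter by letter. Since $\varphi^{(i_k)}=\varphi_2^{(i_k)}\circ\varphi_1^{(i_k)}$ is a composition of the two commuting involutions of Lemma~\ref{lem:opera1}, it is itself an involution, which settles the first assertion. For the rest I would fix the local picture of Figure~\ref{Fig:1}, writing $\pi=u\,\pi(i_{k-2})\,v\,\pi(i_{k-1})\,w\,\pi(i_k)\,x$, and record two structural facts that follow directly from the definition \eqref{eq:defanti1}. First, $\varphi^{(i_k)}$ fixes every letter $\le\pi(i_k)$ and fixes the set of positions carrying a letter $>\pi(i_k)$, permuting those values only among the positions left of $i_k$. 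Second, applying Lemma~\ref{property:compl}(1) to the block $W=(u_1v_1)\,w$ of letters $>\pi(i_k)$ lying left of $i_k$: the step $\varphi_1^{(i_k)}$ replaces $W$ by $W^c$, and the subsequent $\varphi_2^{(i_k)}$ re-complements the prefix occupying the positions left of $i_{k-1}$; hence after $\varphi^{(i_k)}$ the sub-block $u_1v_1$ is restored \emph{up to order isomorphism}, while the window block $w$ is value-reversed. This is precisely the mechanism that localizes the transformation to the window $(i_{k-1},i_k)$.

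Next I would pin down what the localized patterns count. Because every letter strictly between two consecutive antirecords exceeds $\pi(i_k)$, for any occurrence counted by $(\,\cdot\,)_k$ the lower-right point is forced to be the antirecord $\pi(i_k)$ itself (it is the minimum value to the right of the window), and the upper-left point is a letter of $w$. Thus the left-hand triple of \eqref{eq:phia} counts, by \eqref{eq2:lemma1}, the letters of $w$ that are records of $\pi$, refined by the two extra shaded boxes, while the right-hand triple counts the letters of $w$ satisfying the complementary condition that no letter with value in the middle band lies to their left. The identity \eqref{eq:phia} then follows from the position-preserving bijection $\varphi^{(i_k)}$: since $w$ is value-reversed and all letters $\le\pi(i_k)$ stay put, a letter of $w$ is a record of $\pi$ (nothing larger before it among the high letters) if and only if its image satisfies the dual middle-band condition in $\varphi^{(i_k)}(\pi)$, and the two refinements match because the additional shaded regions contain only letters whose relevant relative order is preserved by Lemma~\ref{property:compl}.

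For the invariances \eqref{eq:phib} and \eqref{eq:phic}, with $r\neq k$, I would split into two cases according to the position of the window $(i_{r-1},i_r)$. If $r<k$ the whole window lies left of $i_{k-1}$; there the letters $>\pi(i_k)$ are restored up to order isomorphism and all other letters are fixed, so the standardisation of any sub-window is unchanged and every localized count is preserved. If $r>k$ the window lies to the right of $i_k$, so its points and their forced witnesses are fixed by $\varphi^{(i_k)}$; moreover the record/middle-band status of such a point depends only on the \emph{set} of values $>\pi(i_k)$ occurring left of $i_k$, which $\varphi^{(i_k)}$ leaves invariant since it merely permutes those values among themselves. Hence both localized triples are unchanged, giving \eqref{eq:phib} and \eqref{eq:phic}.

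I expect the crux to be the transformation identity \eqref{eq:phia}, and within it the bookkeeping for the five-box patterns: one must check that re-complementing the left sub-block via $\varphi_2^{(i_k)}$, rather than stopping after $\varphi_1^{(i_k)}$, exactly cancels the disturbance outside the window while still reversing $w$, and that each additional shaded box translates correctly under the value-reversal. Since all of these reduce to the order-isomorphism statements of Lemma~\ref{property:compl}, the difficulty here is organisational rather than conceptual.
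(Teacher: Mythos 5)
Your proposal is correct and takes essentially the same approach as the paper: you force the lower point of each occurrence counted by $(\cdot)_k$ to be the antirecord $\pi(i_k)$ and its upper point to lie strictly between $i_{k-1}$ and $i_k$, prove \eqref{eq:phia} via the order-reversal that $\varphi_1^{(i_k)}$ induces on the letters greater than $\pi(i_k)$ (with $\varphi_2^{(i_k)}$ restoring the prefix up to order isomorphism by Lemma~\ref{property:compl}), and establish \eqref{eq:phib} and \eqref{eq:phic} by the same case split $r>k$ (values left of $i_k$ merely permuted among themselves) and $r<k$ (prefix order-isomorphic after the two complementations). This matches the paper's proof in both structure and level of detail, so there is nothing further to flag.
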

\begin{proof}
If  the pair
$(\pi(j),\pi(i_k))$ with $j<i_k$ contributes the pattern $\pattern{scale=0.5}{2}{1/2,2/1}{0/2,1/0,2/0}$(resp. $\pattern{scale=0.5}{2}{1/2,2/1}{0/2,1/0,2/0,1/1}, \pattern{scale=0.5}{2}{1/2,2/1}{0/1,0/2,1/0,2/0,1/1})$,
then  $j>i_{k-1}$ because $\pi(i_{k-1})<\pi(i_{k})$ and 
$\pi(i)>\pi(i_k)$ for $j\leq i<i_k$.
Also, for  $i<j$, we have the equivalence 
 $$
   \pi(i)<\pi(j)\Longleftrightarrow \pi(i_k)<\varphi_1^{(i_k)}(\pi(i))>\varphi_1^{(i_k)}(\pi(j)),
   $$
as $\varphi_2^{(i_k)}$ will affect only the letters at the left of 
$\pi(i_{k-1})$. Thus  we have proved \eqref{eq:phia}.

Next, recall that the operation $\varphi^{(i_k)}$ keeps the 
sequence of antirecords for both positions and values. The two identities \eqref{eq:phib} and \eqref{eq:phic} are clear if $r>k$.
Assume that $r<k$ and $[\pi]_r=\pi(1)\ldots \pi(i_{k-1})$. 
By Lemma~\ref{property:compl}, 
after two operations
$\varphi_1^{(i_k)}$ and $\varphi_2^{(i_k)}$  the permutation
$\varphi^{(i_k)}([\pi]_r)$ is  isomorphic with $[\pi]_r$. This proves \eqref{eq:phib} and \eqref{eq:phic}.

\end{proof}

\begin{proof}[Proof of Theorem~\ref{main1}]
By \eqref{eq:defPhi}  the reverse of the mapping $\Phi$ is given by 
\begin{align}\label{eq:defPhireverse}
 \Phi^{-1}(\pi)=\varphi^{(i_l)}\circ\cdots\circ \varphi^{(i_2)}\circ
 \varphi^{(i_1)}(\pi).
 \end{align}
Theorem~\ref{main1} follows from Lemma~\ref{lem:opera1},  Lemma~\ref{lemma2.6:main1} and Lemma~\ref{lemma2.7:main1}.
\end{proof}

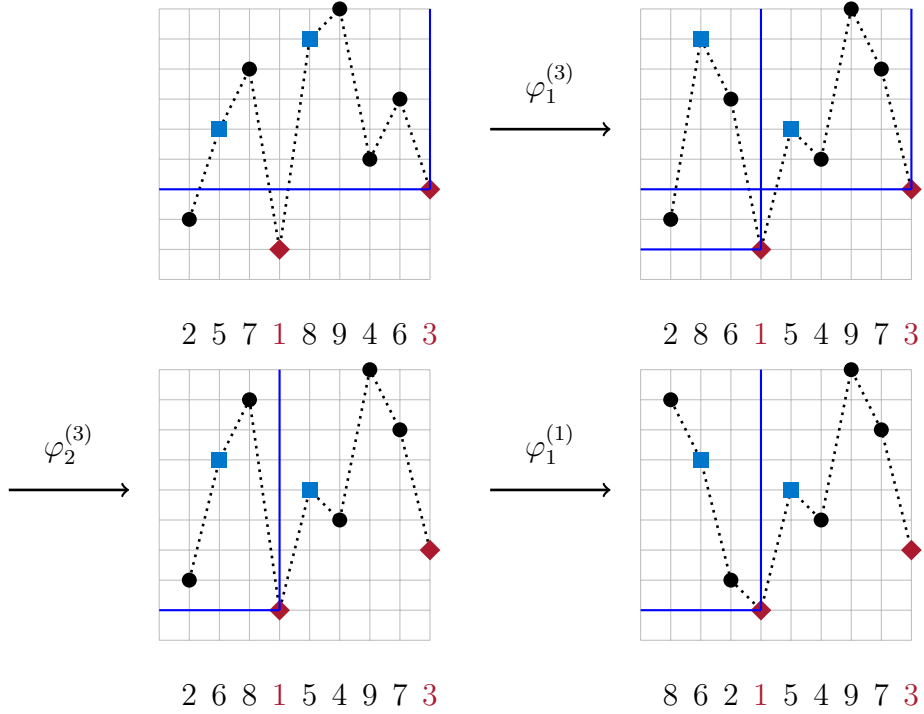
\begin{figure}
\begin{center}
\begin{tikzpicture}[scale=0.4] 	
\draw[step=1,lightgray,thin] (0,0) grid (9,9); 
	\tikzstyle{ridge}=[draw, line width=1, dotted, color=black] 
	\path[ridge] (1,2)--(2,5)--(3,7)--(4,1)--(5,8)--(6,9)--(7,4)--(8,6)--(9,3); 
	\tikzstyle{node0}=[circle, inner sep=2, fill=black] 
	\tikzstyle{node1}=[rectangle, inner sep=3, fill=mhcblue] 
	\tikzstyle{node2}=[diamond, inner sep=2, fill=davidsonred] 
	\node[node0] at (1,2) {}; 
	\node[node1] at (2,5) {}; 
	\node[node0] at (3,7) {}; 
	\node[node2] at (4,1) {}; 
	\node[node1] at (5,8) {}; 
	\node[node0] at (6,9) {}; 
	\node[node0] at (7,4) {}; 
	\node[node0] at (8,6) {}; 
	\node[node2] at (9,3) {}; 
	\draw[thick,blue,-] (0,3) to (9,3);	
	\draw[thick,blue,-] (9,3) to (9,9);		
	\tikzstyle{pi}=[above=-1] 
	\node[pi] at (1,0) {$2$}; 
	\node[pi] at (2,0) {$5$}; 
	\node[pi] at (3,0) {$7$}; 
         \node[pi, color=davidsonred] at (4,0) {$1$}; 
         \node[pi] at (5,0) {$8$};
	\node[pi] at (6,0) {$9$}; 
	\node[pi] at (7,0) {$4$}; 
	\node[pi] at (8,0) {$6$}; 	
         \node[pi, color=davidsonred] at (9,0) {$3$}; 

	\path[draw,line width=1,->] (11,5)--(15,5); 
	\node[pi] at (13,8){$\varphi_1^{(3)}$};
	\begin{scope}[shift={(16,0)}] 
	\draw[step=1,lightgray,thin] (0,0) grid (9,9); 
	\path[ridge] (1,2)--(2,8)--(3,6)--(4,1)--(5,5)--(6,4)--(7,9)--(8,7)--(9,3); 
	\node[node0] at (1,2) {}; 
	\node[node1] at (2,8) {}; 
	\node[node0] at (3,6) {}; 
	\node[node2] at (4,1) {}; 
	\node[node1] at (5,5) {}; 
	\node[node0] at (6,4) {}; 
	\node[node0] at (7,9) {}; 
	\node[node0] at (8,7) {};
	\node[node2] at (9,3) {};
       	\draw[thick,blue,-] (0,3) to (9,3);	
	\draw[thick,blue,-] (9,3) to (9,9);	
		\draw[thick,blue,-] (0,1) to (4,1);	
	\draw[thick,blue,-] (4,1) to (4,9);				       
       
	\node[pi] at (1,0) {$2$}; 
	\node[pi] at (2,0) {$8$}; 
	\node[pi] at (3,0) {$6$}; 
      \node[pi, color=davidsonred] at (4,0) {$1$}; 
	\node[pi] at (5,0) {$5$}; 
	\node[pi] at (6,0) {$4$};
	\node[pi] at (7,0) {$9$}; 
	\node[pi] at (8,0) {$7$};
      \node[pi, color=davidsonred] at (9,0) {$3$}; 
	\end{scope}
	
		\path[draw,line width=1,->] (-5,-7)--(-1,-7); 
	\node[pi] at (-3,-4){$\varphi_2^{(3)}$};
	\begin{scope}[shift={(0,-12)}] 
	\draw[step=1,lightgray,thin] (0,0) grid (9,9); 
	\path[ridge] (1,2)--(2,6)--(3,8)--(4,1)--(5,5)--(6,4)--(7,9)--(8,7)--(9,3); 
	\node[node0] at (1,2) {}; 
	\node[node1] at (2,6) {}; 
	\node[node0] at (3,8) {}; 
	\node[node2] at (4,1) {}; 
	\node[node1] at (5,5) {}; 
	\node[node0] at (6,4) {}; 
	\node[node0] at (7,9) {}; 
	\node[node0] at (8,7) {};
	\node[node2] at (9,3) {};
		\draw[thick,blue,-] (0,1) to (4,1);	
	\draw[thick,blue,-] (4,1) to (4,9);			
	
	\node[pi] at (1,0) {$2$}; 
	\node[pi] at (2,0) {$6$}; 
	\node[pi] at (3,0) {$8$}; 
      \node[pi, color=davidsonred] at (4,0) {$1$}; 
	\node[pi] at (5,0) {$5$}; 
	\node[pi] at (6,0) {$4$};
	\node[pi] at (7,0) {$9$}; 
	\node[pi] at (8,0) {$7$};
      \node[pi, color=davidsonred] at (9,0) {$3$}; 
	\end{scope}

			\path[draw,line width=1,->] (11,-7)--(15,-7); 
	\node[pi] at (13,-4){$\varphi_1^{(1)}$};
	\begin{scope}[shift={(16,-12)}] 
	\draw[step=1,lightgray,thin] (0,0) grid (9,9); 
	\path[ridge] (1,8)--(2,6)--(3,2)--(4,1)--(5,5)--(6,4)--(7,9)--(8,7)--(9,3); 
	\node[node0] at (1,8) {}; 
	\node[node1] at (2,6) {}; 
	\node[node0] at (3,2) {}; 
	\node[node2] at (4,1) {}; 
	\node[node1] at (5,5) {}; 
	\node[node0] at (6,4) {}; 
	\node[node0] at (7,9) {}; 
	\node[node0] at (8,7) {};
	\node[node2] at (9,3) {};
			\draw[thick,blue,-] (0,1) to (4,1);	
	\draw[thick,blue,-] (4,1) to (4,9);

	\node[pi] at (1,0) {$8$}; 
	\node[pi] at (2,0) {$6$}; 
	\node[pi] at (3,0) {$2$}; 
      \node[pi, color=davidsonred] at (4,0) {$1$}; 
	\node[pi] at (5,0) {$5$}; 
	\node[pi] at (6,0) {$4$};
	\node[pi] at (7,0) {$9$}; 
	\node[pi] at (8,0) {$7$};
      \node[pi, color=davidsonred] at (9,0) {$3$}; 
	\end{scope}

\end{tikzpicture}
\end{center}

\caption{The involution $\Phi$ on the permutation $257189463$}\label{fig:2}
\end{figure}

\begin{example}
We show the process of the involution $\Phi$ in Figure~\ref{fig:2}, 
For $\pi= 2\,5\,7\,1\,8\,9\,4\,6\,3$, we have $\AREC(\pi)=(4, 9)$.
 We proceed from right to left. 
 
\begin{enumerate} 
\item For position $9$ with value $3$, w have 
$w=5\, 7\, 8\, 9\, 4\, 6$ and $w'=8\,6\,5\,4\,9\,7$.
Thus  $\varphi_1^{(9)}: \pi\mapsto \pi'=2\,8\,6\,1\,5\,4\,9\,7\,3$. Next,  we have 
$w=8\,6$ and $w'=6\,8$. 
Thus $\varphi_2^{(9)}: \pi'\mapsto \pi''=2\,6\,8\,1\,5\,4\,9\,7\,3$.
\item 
For position $4$ with value $1$ we have 
$w=2\,6\,8$ and $w'=8\,6\,2$. Finally we obtain
$\Phi(\pi)=8\,6\,2\,1\,5\,4\,9\,7\,3$.
\end{enumerate}

Now, we check the mesh patterns.
\begin{itemize}
\item
First, $\varphi_1^{(9)}: \pi= 2\,5\,7\,1\,8\,9\,4\,6\,3 \mapsto \pi'=2\,8\,6\,1\,5\,4\,9\,7\,3$, 
 the pair $(8,3)$ of $\pi$ contributes the pattern $\pattern{scale=0.5}{2}{1/2,2/1}{0/2,1/0,2/0}$ without the patterns $\pattern{scale=0.5}{2}{1/2,2/1}{0/2,1/0,2/0,1/1}, \pattern{scale=0.5}{2}{1/2,2/1}{0/1,0/2,1/0,2/0,1/1}$), 
the pair $(5,3)$ of $\pi'$ contributes the pattern $\pattern{scale=0.5}{2}{1/2,2/1}{0/1,2/0,1/0}$ without the patterns $\pattern{scale=0.5}{2}{1/2,2/1}{0/1,1/0,2/0,1/2}, \pattern{scale=0.5}{2}{1/2,2/1}{0/1,0/2,2/0,1/0,1/2}$,
 the operations $\varphi_2^{(9)}, \varphi_1^{(1)}$ do not change the corresponding mesh pattens at position $9$ of $\pi'$.

\item
Second, $\varphi_2^{(9)}\circ \varphi_1^{(9)}:\pi=2\,5\,7\,1\,8\,9\,4\,6\,3 \mapsto \pi''=2\,6\,8\,1\,5\,4\,9\,7\,3$  it is easy to see that $257\sim 268$. The pair $(5, 1)$ of $\pi$ contributes the patterns $\pattern{scale=0.5}{2}{1/2,2/1}{0/2,1/0,2/0}, \pattern{scale=0.5}{2}{1/2,2/1}{0/2,1/0,2/0,1/1}$ without the pattern $\pattern{scale=0.5}{2}{1/2,2/1}{0/1,0/2,1/0,2/0,1/1}$,  
the pair $(6, 1)$ of $\pi''$ also contributes the pattern $\pattern{scale=0.5}{2}{1/2,2/1}{0/2,1/0,2/0}$,$\pattern{scale=0.5}{2}{1/2,2/1}{0/2,1/0,2/0,1/1}$ without the pattern $\pattern{scale=0.5}{2}{1/2,2/1}{0/1,0/2,1/0,2/0,1/1}$,  
$\varphi_1^{(1)}:\pi''=2\,6\,8\,1\,5\,4\,9\,7\,3 \mapsto \pi'''=8\,6\,2\,1\,5\,4\,9\,7\,3$, the pair $(6,1)$ of $\pi'''$ contributes the pattern $\pattern{scale=0.5}{2}{1/2,2/1}{0/1,2/0,1/0}$, $\pattern{scale=0.5}{2}{1/2,2/1}{0/1,1/0,2/0,1/2}$ without the pattern $\pattern{scale=0.5}{2}{1/2,2/1}{0/1,0/2,2/0,1/0,1/2}$.
\end{itemize}

\end{example}

\section{Proof of Theorem~\ref{main2}}
First we introduce two mappings different from Section~2. For $\pi\in S_n$, 
recall that $\AREC(\pi)=(i_1,i_2,\ldots, i_l)$ be the sequence of antirecord positions  of $\pi$ from left to right. 
For any antirecord position $i_k$ we define two mappings
\begin{subequations}\label{eq:defanti2}
\begin{align}
&\psi_1^{(i_k)}: \pi\mapsto \pi'\\
&\psi_2^{(i_k)}: \pi\mapsto \pi''
\end{align}
\end{subequations}
as follows: 
\begin{itemize}
\item let  $w=w_1\ldots w_r$ is  the subword of $\pi$ 
consisting of letters  greater than $\pi(i_k)$ on the right side of $\pi(i_{k-1})$ (resp. $\pi(i_{k})$)   with $\pi(i_0)=0$;
\item let
$w'=w_1'\ldots w_r'$  be the word obtained by substituting 
the $j$th largest letter  with  the $j$th smallest letter in $w$ for $j=1,\ldots, r$;
\item let  $\pi'$ (resp. $\pi''$) is defined to be the word obtained by 
replacing $w_j$ with  $w_j'$ in $\pi$. 
\end{itemize}
 Note that  $\pi'(i_k)=\pi(i_k)$.

\begin{lemma}\label{lem:opera2}
For any antirecord positions $i_{k-1}$ and $i_k$ of $\pi\in S_n$ 
the mappings $\psi_1^{(i_k)}$  and  $\psi_2^{(i_k)}$ are involutions and 
commutate, namely,
\begin{equation}\label{eq1:lem3.1}
\psi_1^{(i_k)}\circ \psi_1^{(i_k)}(\pi)=
\psi_2^{(i_k)}\circ \psi_2^{(i_k)}(\pi)=\pi
\end{equation}
and
\begin{equation}\label{eq2:lem3.1}
\psi_2^{(i_k)}\circ \psi_1^{(i_k)}(\pi)=\psi_1^{(i_k)}\circ \psi_2^{(i_k)}(\pi).
\end{equation}
Let  $\psi^{(i_k)}=\psi_2^{(i_k)}\circ \psi_1^{(i_k)}$.  Then $\psi^{(i_k)}(\pi)$ and $\pi$ have the same  sequence of antirecord 
positions.
\end{lemma}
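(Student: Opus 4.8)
The plan is to prove Lemma~\ref{lem:opera2} by mimicking the structure of the proof of Lemma~\ref{lem:opera1}, since the mappings $\psi_1^{(i_k)}$ and $\psi_2^{(i_k)}$ are defined by exactly the same ``reverse the relative order'' recipe as $\varphi_1^{(i_k)}$ and $\varphi_2^{(i_k)}$; only the window of letters being rearranged has changed (now we collect letters greater than $\pi(i_k)$ lying to the \emph{right} of $\pi(i_{k-1})$, resp. $\pi(i_k)$, rather than to the left). First I would establish the two involution identities in \eqref{eq1:lem3.1}. The key observation is that each $\psi_j^{(i_k)}$ acts only by applying the order-reversing complement to a \emph{fixed} subword $w$, namely the subword of letters exceeding $\pi(i_k)$ in a prescribed positional window. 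Since taking the complement of a word twice returns the original word (the $j$th largest becomes the $j$th smallest and vice versa), applying $\psi_j^{(i_k)}$ twice restores $w$, hence restores $\pi$. The only subtlety to check is that the window and the threshold value $\pi(i_k)$ are themselves invariant under a single application of $\psi_j^{(i_k)}$, so that the second application reverses exactly the same positions; this holds because $\pi(i_k)$ is untouched (the letters being moved are all strictly greater than it) and the positions to the right of $\pi(i_{k-1})$ (resp. $\pi(i_k)$) are preserved as a set.

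Next I would prove the commutation \eqref{eq2:lem3.1}. Here the argument splits as in Lemma~\ref{lem:opera1}: one first checks that $\psi_2^{(i_k)}\circ \psi_1^{(i_k)}(\pi)$ and $\psi_1^{(i_k)}\circ \psi_2^{(i_k)}(\pi)$ are order isomorphic, and then upgrades the isomorphism to an equality. The isomorphism follows because both composites rearrange the same underlying multiset of large letters within the same union of positions, so by Lemma~\ref{property:compl} they produce order-isomorphic words. To get genuine equality one notes that the set of letters greater than $\pi(i_k)$ lying to the right of $\pi(i_{k-1})$ is invariant under $\psi_1^{(i_k)}$ (and likewise the relevant set is invariant under $\psi_2^{(i_k)}$), because these operations permute those letters among themselves without altering which letters exceed $\pi(i_k)$ or their positions relative to $\pi(i_{k-1})$ and $\pi(i_k)$. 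Since the two operations act on nested windows sharing the same threshold, the composites agree not merely up to isomorphism but on the nose, giving \eqref{eq2:lem3.1}.

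Finally I would verify the last assertion, that $\psi^{(i_k)}=\psi_2^{(i_k)}\circ\psi_1^{(i_k)}$ preserves the sequence of antirecord positions. This is immediate from the definitions: every letter moved by $\psi_1^{(i_k)}$ or $\psi_2^{(i_k)}$ is strictly greater than $\pi(i_k)$, and the antirecord values $\pi(i_1),\dots,\pi(i_l)$ are left fixed (in particular $\pi(i_k)$ itself is never displaced, as is recorded in the definition via $\pi'(i_k)=\pi(i_k)$). Rearranging letters that are all larger than a given antirecord, while keeping each antirecord value in its original slot, cannot create a new right-to-left minimum nor destroy an existing one, so $\AREC$ is unchanged.

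I expect the main obstacle to be the passage from order isomorphism to literal equality in the commutation step. Unlike the involution identities, which are essentially formal, the equality $\psi_2^{(i_k)}\circ\psi_1^{(i_k)}=\psi_1^{(i_k)}\circ\psi_2^{(i_k)}$ requires one to track carefully that the window of letters seen by the second operation is genuinely invariant under the first, and this is where the nested-window structure (right of $\pi(i_{k-1})$ versus right of $\pi(i_k)$) must be exploited via Lemma~\ref{property:compl} rather than merely invoked. Everything else reduces to the elementary bookkeeping already packaged in that lemma.
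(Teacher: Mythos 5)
Your handling of the two algebraic identities is sound and follows the paper's own route: involutivity in \eqref{eq1:lem3.1} is the double-complement observation on a window whose position set and threshold $\pi(i_k)$ are invariant, and for \eqref{eq2:lem3.1} you first get order isomorphism of the two composites and then upgrade to equality using the invariance of the set of letters greater than $\pi(i_k)$ to the right of position $i_{k-1}$ under both operations --- this is exactly the argument in the paper, at about the same level of detail.

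However, your proof of the final assertion (preservation of $\AREC$) contains a genuine error. You claim that ``the antirecord values $\pi(i_1),\dots,\pi(i_l)$ are left fixed'' and conclude that each of $\psi_1^{(i_k)}$, $\psi_2^{(i_k)}$ separately leaves $\AREC$ unchanged. Both claims are false: for $m>k$ the value $\pi(i_m)$ exceeds $\pi(i_k)$ and sits to the right of positions $i_{k-1}$ and $i_k$, so it belongs to both windows and is generally displaced. The paper's own example (Figure~\ref{fig:3}) refutes your argument: $\psi_1^{(3)}$ sends $\pi=9\,3\,1\,5\,8\,2\,6\,7\,4$, with $\AREC(\pi)=(3,6,9)$, to $\pi'=2\,8\,1\,6\,3\,9\,5\,4\,7$, whose right-to-left minima occupy positions $(3,5,8,9)$; moreover the composite $\psi^{(3)}$ yields $2\,8\,1\,5\,9\,3\,6\,7\,4$, in which the antirecord \emph{value} at position $6$ has changed from $2$ to $3$. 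You have imported a property of the Section~2 maps $\varphi_j^{(i_k)}$ (where the Remark does assert preservation of antirecords in both positions and values) that simply fails for the $\psi_j^{(i_k)}$; only the composite $\psi^{(i_k)}$ preserves the positions, and only the positions. The correct mechanism is different: writing $\pi=u\,\pi(i_{k-1})\,v\,\pi(i_k)\,w$, the suffix $w$ is complemented twice --- once inside the alphabet of $vw$ by $\psi_1^{(i_k)}$, once inside its own current alphabet by $\psi_2^{(i_k)}$ --- so by Lemma~\ref{property:compl}(1) (applied to the second factor) the final suffix is order isomorphic to $w$, whence the right-to-left minima in the suffix keep their positions; the letters strictly between $i_{k-1}$ and $i_k$ remain greater than $\pi(i_k)$ throughout, so they are never antirecords; $\pi(i_k)$ itself is fixed; and every position $\le i_{k-1}$ keeps its letter and sees the same \emph{set} of letters to its right. (For what it is worth, the paper's published proof only addresses \eqref{eq1:lem3.1} and \eqref{eq2:lem3.1} and leaves the $\AREC$ claim tacit, so supplying this argument is worthwhile --- but it must be the order-isomorphism argument, not value-fixedness.)
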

 
\begin{proof}
From the definitions of $\psi_1^{(i_k)}$ and $\psi_2^{(i_k)}$ in Eq.~\eqref{eq:defanti2}, it is easy to check Eq.~\eqref{eq1:lem3.1} holds and 
$$
\psi_2^{(i_k)}\circ \psi_1^{(i_k)}(\pi) \sim \psi_1^{(i_k)}\circ \psi_2^{(i_k)}(\pi).
$$
Since the set of letters greater than $\pi(i_k)$ on the right of $\pi(i_{k-1})$ are invariant under the operation $\psi_1^{(i_k)}$ and  $\psi_2^{(i_k)}$ on $\pi$, we obtain Eq.~\eqref{eq2:lem3.1} immediately. 
\end{proof}

%

\begin{lemma}\label{lemma2:main2} 
For $\pi\in S_n$ with $\AREC(\pi)=\{i_1, \ldots, i_l\}$. For $k=2, \ldots, l$ the mappings  
$\psi^{(i_{k-1})}$ and $\psi^{(i_k)}$ commute, i.e., 
$$
\psi^{(i_k)}\circ \psi^{(i_{k-1})}(\pi)=\psi^{(i_{k-1})}\circ \psi^{(i_k)}(\pi).
$$
\end{lemma}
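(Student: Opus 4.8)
The plan is to mirror, now reading the permutation to the right of the antirecords, the proof of Lemma~\ref{lemma2.6:main1}. Throughout set $f:=\psi^{(i_k)}$ and $g:=\psi^{(i_{k-1})}$, so the assertion is $f\circ g(\pi)=g\circ f(\pi)$. First I would record the geometry forced by consecutive antirecords: $\pi(i_{k-2})<\pi(i_{k-1})<\pi(i_k)$, every letter strictly between $i_{k-1}$ and $i_k$ exceeds $\pi(i_k)$, every letter to the right of $i_k$ exceeds $\pi(i_k)$, and every letter strictly between $i_{k-2}$ and $i_{k-1}$ exceeds $\pi(i_{k-1})$. Accordingly I decompose
\[
\pi=D\,A\,\pi(i_{k-1})\,B\,\pi(i_k)\,C ,
\]
where $D$ is the prefix up to and including position $i_{k-2}$ (with the convention $\pi(i_0)=0$ when $k=2$), $A$ is the block between $i_{k-2}$ and $i_{k-1}$, $B$ the block between $i_{k-1}$ and $i_k$, and $C$ the suffix after $i_k$. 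I split $A=A_hA_m$ into its \emph{high} letters ($>\pi(i_k)$) and \emph{mid} letters (strictly between $\pi(i_{k-1})$ and $\pi(i_k)$), and note that $B$ and $C$ consist of high letters only; this is the right-handed analogue of Figure~\ref{Fig:1}.

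Unwinding the definitions on this decomposition, $g$ complements the letters $>\pi(i_{k-1})$ to the right of $i_{k-2}$ (hence all of $A$, of $B$, the entry $\pi(i_k)$, and of $C$) and then those $>\pi(i_{k-1})$ to the right of $i_{k-1}$; whereas $f$ complements the letters $>\pi(i_k)$ to the right of $i_{k-1}$ (hence $B$ and $C$, but \emph{not} the entry $\pi(i_k)$) and then those to the right of $i_k$. Note the crucial reversal of Section~2: here the operation $g$, carrying the \emph{smaller} threshold, acts on the \emph{larger} window, so that the support of $f$ is contained in the support of $g$. Since $f$ fixes $D$, $\pi(i_{k-1})$ and the whole of $A$, the $A$-block agrees in both orders: in $g\circ f$ the block $A$ enters $g$ unchanged, and the images of its letters under the first complementation depend only on their ranks in the value-multiset of $g$'s window, which $f$ does not alter; in $f\circ g$ the block $A$ is simply untouched by $f$. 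The same remark fixes $D$ and $\pi(i_{k-1})$ throughout.

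It remains to match the shared region $B\,\pi(i_k)\,C$. Here I would reproduce, block by block, the chain of five order-isomorphism facts of Lemma~\ref{lemma2.6:main1}: Lemma~\ref{property:compl}(2) propagates the order-isomorphisms between the high blocks and their successive images through each complementation, while Lemma~\ref{property:compl}(3) propagates the equalities of the distinguished subwords $[\,\cdot\,]_x$ (with $x$ the set of high letters) that persist under a complement whenever the ambient words are order-isomorphic. Chaining these shows that the images of $B$ and $C$, together with the entry placed at position $i_k$, are produced identically by $g\circ f$ and by $f\circ g$, whence $f\circ g(\pi)=g\circ f(\pi)$. That each $\psi^{(i_k)}$ is an involution is already contained in Lemma~\ref{lem:opera2}.

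The main obstacle, and the feature genuinely new relative to Section~2, is the boundary entry $\pi(i_k)$. Because $g$'s threshold $\pi(i_{k-1})$ lies below $\pi(i_k)$ while $g$'s window contains position $i_k$, the map $g$ generally \emph{moves} the value sitting at $i_k$ (it preserves only the antirecord \emph{position} there, by Lemma~\ref{lem:opera2}); consequently, when $f$ is applied to $g(\pi)$ its threshold is the new value $g(\pi)(i_k)$ rather than $\pi(i_k)$, so I cannot argue, as in Section~2, that the two maps act on literally the same letters. What rescues the argument is that $f$ and $g$ are order-theoretic—they depend only on relative order—and Lemma~\ref{property:compl} guarantees that complementation commutes both with order-isomorphism and with passage to the distinguished subwords. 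Checking that the mid letters $A_m$ and this shifted threshold are threaded consistently through the two composition orders, so that $f$ after $g$ selects and reverses exactly the high letters that $g$ after $f$ does, is the crux; the remaining bookkeeping is identical to that of Lemma~\ref{lemma2.6:main1}.
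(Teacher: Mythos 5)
Your proposal is correct and essentially reproduces the paper's own proof: the paper decomposes $\pi=u\,\pi(i_{k-1})\,v\,\pi(i_k)\,w$ (your $u=D\,A$), and the chain you defer to at the end is exactly its facts (1)--(5) --- by Lemma~\ref{property:compl} the double complementation gives $v'\pi'(i_k)w'\sim v\,\pi(i_k)\,w$, the order-theoretic nature of $\psi^{(i_k)}$ and $\psi^{(i_{k-1})}$ then yields $\tilde v\,\pi'(i_k)\,\tilde w\sim\hat v\,\hat\pi(i_k)\,\hat w$, and equality of the letter sets of these two words forces their equality, which in particular settles your ``shifted threshold'' point via $\hat\pi(i_k)=\pi'(i_k)$. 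Your closing worry about the mid letters $A_m$ is already discharged by your own rank argument for the $A$-block (the shared region $B\,\pi(i_k)\,C$ contains no mid letters), an argument which in fact justifies the common prefix $u'$ that the paper writes without comment in \eqref{eq:bargf} and \eqref{eq:barfg}.
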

\begin{proof}
For  the permutation $\pi=\pi(1)\ldots \pi(n)$ we write
$\pi=u\pi(i_{k-1})v\pi(i_{k})w$ and 
\begin{subequations}
\begin{align}
\psi^{(i_{k-1})}(\pi):&=\pi'=u'\pi(i_{k-1})v'\pi'(i_{k})w'\label{eq:barf}\\
\psi^{(i_k)}\circ \psi^{(i_{k-1})} (\pi):&=\tilde \pi= u'\pi(i_{k-1})\tilde v\pi'(i_{k})\tilde w,\label{eq:bargf}\\
\psi^{(i_k)}(\pi):&=\pi''=u\pi(i_{k-1})v''\pi(i_{k})w'',\label{eq:barg}\\
\psi^{(i_{k-1})}\circ \psi^{(i_k)} (\pi):&=\hat \pi= u'\pi(i_{k-1})\hat v \hat \pi(i_{k})\hat w.\label{eq:barfg}
\end{align}
\end{subequations}
By the definition of  $\psi^{(i_{k})}$, we have the following facts.
\begin{enumerate}
\item Applying $\psi^{(i_{k-1})}$ to $\pi$ (cf. \eqref{eq:barf}) we have   $v'\pi'(i_k)w'\sim v\pi(i_k)w$;
\item Applying $\psi^{(i_k)}$ to $\psi^{(i_{k-1})}(\pi)$ and $\pi$ (cf.\eqref{eq:bargf} and \eqref{eq:barg}), respectively,  we take complement of $v'$ and $v$ once, while twice for
$w'$ and $w$. By Lemma~\ref{property:compl} and (1) we get
$\tilde v\pi'(i_k)\tilde w \sim v''\pi(i_k)w''$;
 \item Applying $\psi^{(i_{k-1})}$ to $\psi^{(i_{k})}(\pi)$ (cf. \eqref{eq:barg} and \eqref{eq:barfg}),  by (1) we have   $\hat v\hat\pi(i_k)\hat w\sim v''\pi(i_k)w''$,  combining with 
 by (2) yields 
 $\hat v\hat \pi(i_k)\hat w\sim \tilde v\pi'(i_k)\tilde w$.
  \item 
 Let $\{w\}$ denote the set of letters in $w$.  
 \begin{itemize}
 \item Applying $\psi^{(i_{k})}$ to $\psi^{(i_{k-1})}(\pi)$ we have 
 $\{u'v'\pi'(i_k)w'\}=\{u'\tilde v\pi'(i_k)\tilde w\}$, 
 \item    applying $\psi^{(i_{k-1})}$ to $\pi$ we have 
 $\{u'v'\pi'(i_{k})w'\}=\{ uv\pi(i_{k})w\}$,
  \item applying $\psi^{(i_{k})}$ to $\pi$ we have 
 $\{uv''\pi(i_{k})w''\}=\{ uv \pi(i_{k})w\}$,
 \item applying $\psi^{(i_{k-1})}$ to $\psi^{(i_{k})}(\pi)$ we have 
 $\{u'\hat v\hat \pi(i_{k})\hat w\}=\{u v''\pi(i_{k})w''\}$,
 \item it follows that $ \{u'\hat v\hat \pi(i_k)\hat w\}=\{u'\tilde v\pi'(i_k)\tilde w\}$ and  $\{\hat v \hat \pi(i_k)\hat w\}=\{\tilde v \pi'(i_k) \tilde w\}$.
\end{itemize}
 
\item  It follows from (3) and (4)  that $\hat v \hat \pi(i_k)\hat w=\tilde v \pi'(i_k) \tilde w$, that is, 
 Thus 
 $\hat v= \tilde v$, $\hat \pi(i_k)=\pi'(i_k)$ and $\hat w=\tilde w$.
 \end{enumerate}
 \end{proof}

\begin{lemma}\label{lem:opera2bis} The mapping $\psi^{(i)}$ is an involution  such that for  $\pi\in S_n$ and $r\neq k$
\begin{subequations}
\begin{align}\label{eq:psia}
(\pattern{scale=0.5}{2}{1/2,2/1}{1/0,2/0,2/2}, \pattern{scale=0.5}{2}{1/2,2/1}{1/0, 1/1,2/0,2/2},\pattern{scale=0.5}{2}{1/2,2/1}{1/0, 1/1, 1/2,2/0,2/2})_k\;\pi
=
(\pattern{scale=0.5}{2}{1/2,2/1}{2/1,1/0,2/0}, \pattern{scale=0.5}{2}{1/2,2/1}{2/1,1/0,2/0,1/2}, \pattern{scale=0.5}{2}{1/2,2/1}{2/1,1/0,2/0,1/1,1/2})_k\;\psi^{(i_k)}(\pi),\\
(\pattern{scale=0.5}{2}{1/2,2/1}{1/0,2/0,2/2}, \pattern{scale=0.5}{2}{1/2,2/1}{1/0, 1/1,2/0,2/2},\pattern{scale=0.5}{2}{1/2,2/1}{1/0, 1/1, 1/2,2/0,2/2})_r\;\pi
=
(\pattern{scale=0.5}{2}{1/2,2/1}{1/0,2/0,2/2}, \pattern{scale=0.5}{2}{1/2,2/1}{1/0, 1/1,2/0,2/2},\pattern{scale=0.5}{2}{1/2,2/1}{1/0, 1/1, 1/2,2/0,2/2})_r\;\psi^{(i_k)}(\pi),\label{eq:psib}\\
(\pattern{scale=0.5}{2}{1/2,2/1}{2/1,1/0,2/0}, \pattern{scale=0.5}{2}{1/2,2/1}{2/1,1/0,2/0,1/2}, \pattern{scale=0.5}{2}{1/2,2/1}{2/1,1/0,2/0,1/1,1/2})_r\;\pi
=
(\pattern{scale=0.5}{2}{1/2,2/1}{2/1,1/0,2/0}, \pattern{scale=0.5}{2}{1/2,2/1}{2/1,1/0,2/0,1/2}, \pattern{scale=0.5}{2}{1/2,2/1}{2/1,1/0,2/0,1/1,1/2})_r\;\psi^{(i_k)}(\pi).
\label{eq:psic}
\end{align}
\end{subequations}
where $(\textrm{pattern})_k$ means the number of the patterns between $\pi(i_{k-1})$ and $\pi(i_k)$.
\end{lemma}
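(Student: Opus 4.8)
The plan is to mirror the proof of Lemma~\ref{lemma2.7:main1}, replacing the operations $\varphi_1^{(i_k)},\varphi_2^{(i_k)}$ by $\psi_1^{(i_k)},\psi_2^{(i_k)}$. That $\psi^{(i_k)}=\psi_2^{(i_k)}\circ\psi_1^{(i_k)}$ is an involution is immediate from Lemma~\ref{lem:opera2}: since the two factors are commuting involutions, $(\psi_2^{(i_k)}\circ\psi_1^{(i_k)})^2=\psi_2^{(i_k)}\circ\psi_2^{(i_k)}\circ\psi_1^{(i_k)}\circ\psi_1^{(i_k)}=\id$. The first thing I would record is the geometric dictionary. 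Because $i_{k-1}$ and $i_k$ are consecutive antirecords, $\pi(i_k)=\min\{\pi(i_{k-1}+1),\dots,\pi(n)\}$, so every letter at a position $>i_{k-1}$ other than $\pi(i_k)$ itself exceeds $\pi(i_k)$. Hence the letters moved by $\psi_1^{(i_k)}$ are exactly those in the two blocks $B$ (positions in $(i_{k-1},i_k)$) and $A$ (positions in $(i_k,n]$), while $\psi_2^{(i_k)}$ moves only the block $A$; moreover $\psi_1^{(i_k)}$ reverses every order relation inside $B\cup A$, and $\psi_2^{(i_k)}$ reverses every order relation inside $A$.

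For \eqref{eq:psia} I would decode the three left-hand patterns at $i_k$: in each the lower point is the antirecord $\pi(i_k)$, and the box $(1,0)$ forces the upper point $\pi(a)$ into the block $B$, i.e. $i_{k-1}<a<i_k$. The remaining shaded boxes compare $\pi(a)$ with the block $A$ (box $(2,2)$: $\pi(a)$ lies above all of $A$) and with the letters of $B$ lying between $a$ and $i_k$ (box $(1,1)$). Applying $\psi_1^{(i_k)}$ reverses these relations, interchanging the boxes $(2,1)\leftrightarrow(2,2)$ and $(1,1)\leftrightarrow(1,2)$ while fixing $(1,0)$ and $(2,0)$, which is precisely the passage from the three left-hand patterns to the three right-hand patterns; the box $(1,0)$ is vacuous on the all-tall block $B$, and $(2,0)$ records the antirecord $\pi(i_k)$, which $\psi^{(i_k)}$ fixes. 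The subsequent $\psi_2^{(i_k)}$ only permutes $A$ within itself and fixes the position $a$, so it leaves intact the comparison between $\pi(a)$ and the set $A$. Since $\psi^{(i_k)}$ fixes $i_k$ and keeps $\pi(a)$ tall at the same position $a$, this matches each left pattern at $i_k$ in $\pi$ with the corresponding right pattern at $i_k$ in $\psi^{(i_k)}(\pi)$, giving \eqref{eq:psia}.

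For the invariance identities \eqref{eq:psib} and \eqref{eq:psic} I would split on the sign of $r-k$, after noting that none of the six patterns shades a box in column $0$, so a pattern anchored at $i_r$ constrains nothing strictly to the left of its upper point. If $r>k$, every point and every shaded region of such a pattern lies at a position $>i_k$, i.e. inside $A$; since $\psi_1^{(i_k)}$ and $\psi_2^{(i_k)}$ each reverse the order type of $A$, their composition restores it, so $\psi^{(i_k)}$ preserves the order-isomorphism type of the suffix to the right of $i_k$ and hence every pattern count there. If $r<k$, the two points $\pi(a),\pi(i_r)$ and the between-letters lie at positions $\le i_{k-1}$, which $\psi^{(i_k)}$ fixes pointwise; the only boxes reaching into the moved region are the column-$2$ boxes, and their truth depends only on the largest (box $(2,2)$) or smallest (box $(2,1)$) value to the right of $i_r$. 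As $\psi^{(i_k)}$ permutes the values at positions $>i_{k-1}$ among themselves and fixes everything else, it preserves the set of values to the right of $i_r$, hence these extrema, proving \eqref{eq:psib} and \eqref{eq:psic}.

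The routine part is the order-reversal bookkeeping that turns each left box into its right counterpart. The delicate point I expect to spend the most care on is the case $r>k$: each of $\psi_1^{(i_k)}$ and $\psi_2^{(i_k)}$ completely rearranges the actual values of the block $A$, and it is only their composition that is order-preserving on $A$; establishing this cleanly, together with verifying that no shaded box of any of the six patterns reaches left of the upper point (so that a pattern at $i_r$ genuinely sees only the block it lives in), is the crux.
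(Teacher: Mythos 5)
Your proposal is correct and follows essentially the same route as the paper: the order-reversal dictionary under $\psi_1^{(i_k)}$ (swapping the boxes $(2,1)\leftrightarrow(2,2)$ and $(1,1)\leftrightarrow(1,2)$) with $\psi_2^{(i_k)}$ acting only to the right of $i_k$ gives \eqref{eq:psia}, and the case split on $r$ versus $k$ with the suffix order-isomorphism (Lemma~\ref{property:compl}, since the two complementations compose to an order-preserving map on the block $A$) gives \eqref{eq:psib} and \eqref{eq:psic}. The only difference is one of detail: you spell out the case $r<k$ via preservation of the value set (hence the extrema governing the column-$2$ boxes) to the right of $i_r$, which the paper dismisses as clear, while correctly identifying the same crux ($r>k$) that the paper handles the same way.
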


\begin{proof}
If  the pair
$(\pi(j),\pi(i_k))$ with $j<i_k$ contributes the pattern $\pattern{scale=0.5}{2}{1/2,2/1}{2/2,2/0,1/0}$(resp. $\pattern{scale=0.5}{2}{1/2,2/1}{1/1,1/0,2/0,2/2}, \pattern{scale=0.5}{2}{1/2,2/1}{1/1,2/2,2/0,1/0,1/2})$,
then  $j>i_{k-1}$ because $\pi(i_{k-1})<\pi(i_{k})$ and 
$\pi(i)>\pi(i_k)$ for $j\leq i<i_k$.
Also, for  $j<i_k<i$, we have the equivalence 
 $$
  \pi(i_k)<\pi(i)<\pi(j)\Longleftrightarrow \pi(i_k)<\psi_1^{(i_k)}(\pi(j))<\psi_1^{(i_k)}(\pi(i)),
   $$
as $\psi_2^{(i_k)}$ will affect only the letters at the right of 
$\pi(i_{k})$. Thus  we have proved \eqref{eq:psia}.

Next, recall that the operation $\psi^{(i_k)}$ keeps the 
sequence of antirecord positions. The two identities \eqref{eq:psib} and \eqref{eq:psic} are clear if $r<k$.
Assume that $r>k$ and $[\pi]_{>r}=\pi(i_k+1)\ldots \pi(n)$. 
By Lemma~\ref{property:compl} after two operations
$\psi_1^{(i_k)}$ and $\psi_2^{(i_k)}$  the permutation
$\psi^{(i_k)}([\pi]_{>r})$ is  isomorphic with $[\pi]_{>r}$. This proves \eqref{eq:psib} and \eqref{eq:psic}.

\end{proof}

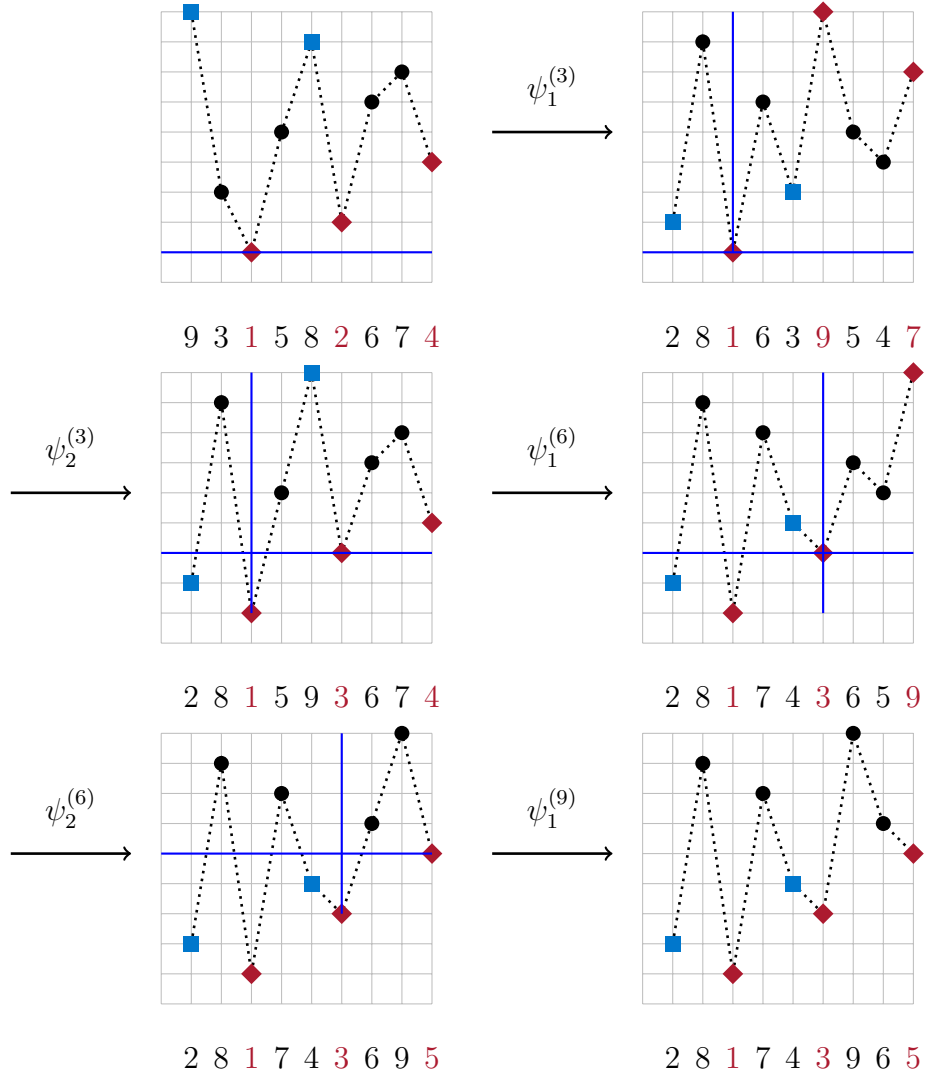
\begin{figure}
\begin{center}
\begin{tikzpicture}[scale=0.4] 	
\draw[step=1,lightgray,thin] (0,0) grid (9,9); 
	\tikzstyle{ridge}=[draw, line width=1, dotted, color=black] 
	\path[ridge] (1,9)--(2,3)--(3,1)--(4,5)--(5,8)--(6,2)--(7,6)--(8,7)--(9,4); 
	\tikzstyle{node0}=[circle, inner sep=2, fill=black] 
	\tikzstyle{node1}=[rectangle, inner sep=3, fill=mhcblue] 
	\tikzstyle{node2}=[diamond, inner sep=2, fill=davidsonred] 
	\node[node1] at (1,9) {}; 
	\node[node0] at (2,3) {}; 
	\node[node2] at (3,1) {}; 
	\node[node0] at (4,5) {}; 
	\node[node1] at (5,8) {}; 
	\node[node2] at (6,2) {}; 
	\node[node0] at (7,6) {}; 
	\node[node0] at (8,7) {}; 
	\node[node2] at (9,4) {}; 
	\draw[thick,blue,-] (0,1) to (9,1);	
	\tikzstyle{pi}=[above=-1] 
	\node[pi] at (1,0) {$9$}; 
	\node[pi] at (2,0) {$3$}; 
         \node[pi, color=davidsonred] at (3,0) {$1$}; 
         \node[pi] at (4,0) {$5$};
	\node[pi] at (5,0) {$8$}; 
 \node[pi, color=davidsonred] at (6,0) {$2$}; 	
	\node[pi] at (7,0) {$6$}; 
	\node[pi] at (8,0) {$7$}; 	
         \node[pi, color=davidsonred] at (9,0) {$4$}; 

	\path[draw,line width=1,->] (11,5)--(15,5); 
	\node[pi] at (13,8){$\psi_1^{(3)}$};
	\begin{scope}[shift={(16,0)}] 
	\draw[step=1,lightgray,thin] (0,0) grid (9,9); 
	\path[ridge] (1,2)--(2,8)--(3,1)--(4,6)--(5,3)--(6,9)--(7,5)--(8,4)--(9,7); 
	\node[node1] at (1,2) {}; 
	\node[node0] at (2,8) {}; 
	\node[node2] at (3,1) {}; 
	\node[node0] at (4,6) {}; 
	\node[node1] at (5,3) {}; 
	\node[node2] at (6,9) {}; 
	\node[node0] at (7,5) {}; 
	\node[node0] at (8,4) {};
	\node[node2] at (9,7) {};
       	\draw[thick,blue,-] (0,1) to (9,1);	
	\draw[thick,blue,-] (3,1) to (3,9);	
       
	\node[pi] at (1,0) {$2$}; 
	\node[pi] at (2,0) {$8$}; 
 \node[pi, color=davidsonred] at (3,0) {$1$}; 
         \node[pi] at (4,0) {$6$};  
        \node[pi] at (5,0) {$3$}; 
	\node[pi, color=davidsonred] at (6,0) {$9$};
	\node[pi] at (7,0) {$5$}; 
	\node[pi] at (8,0) {$4$};
      \node[pi, color=davidsonred] at (9,0) {$7$}; 
	\end{scope}
		\path[draw,line width=1,->] (-5,-7)--(-1,-7); 
	\node[pi] at (-3,-4){$\psi_2^{(3)}$};
	\begin{scope}[shift={(0,-12)}] 
	\draw[step=1,lightgray,thin] (0,0) grid (9,9); 
	\path[ridge] (1,2)--(2,8)--(3,1)--(4,5)--(5,9)--(6,3)--(7,6)--(8,7)--(9,4); 
	\node[node1] at (1,2) {}; 
	\node[node0] at (2,8) {}; 
	\node[node2] at (3,1) {}; 
	\node[node0] at (4,5) {}; 
	\node[node1] at (5,9) {}; 
	\node[node2] at (6,3) {}; 
	\node[node0] at (7,6) {}; 
	\node[node0] at (8,7) {};
	\node[node2] at (9,4) {};
		\draw[thick,blue,-] (0,3) to (9,3);	
	\draw[thick,blue,-] (3,1) to (3,9);			
	
	\node[pi] at (1,0) {$2$}; 
	\node[pi] at (2,0) {$8$}; 
	\node[pi, color=davidsonred] at (3,0) {$1$}; 
      \node[pi] at (4,0) {$5$}; 
	\node[pi] at (5,0) {$9$}; 
	\node[pi, color=davidsonred] at (6,0) {$3$};
	\node[pi] at (7,0) {$6$}; 
	\node[pi] at (8,0) {$7$};
      \node[pi, color=davidsonred] at (9,0) {$4$}; 
	\end{scope}	
	
			\path[draw,line width=1,->] (11,-7)--(15,-7); 
	\node[pi] at (13,-4){$\psi_1^{(6)}$};
	\begin{scope}[shift={(16,-12)}] 
	
		\draw[step=1,lightgray,thin] (0,0) grid (9,9); 
	\path[ridge] (1,2)--(2,8)--(3,1)--(4,7)--(5,4)--(6,3)--(7,6)--(8,5)--(9,9); 
	\node[node1] at (1,2) {}; 
	\node[node0] at (2,8) {}; 
	\node[node2] at (3,1) {}; 
	\node[node0] at (4,7) {}; 
	\node[node1] at (5,4) {}; 
	\node[node2] at (6,3) {}; 
	\node[node0] at (7,6) {}; 
	\node[node0] at (8,5) {};
	\node[node2] at (9,9) {};
		\draw[thick,blue,-] (0,3) to (9,3);	
	\draw[thick,blue,-] (6,1) to (6,9);			
	
	\node[pi] at (1,0) {$2$}; 
	\node[pi] at (2,0) {$8$}; 
	\node[pi, color=davidsonred] at (3,0) {$1$}; 
      \node[pi] at (4,0) {$7$}; 
	\node[pi] at (5,0) {$4$}; 
	\node[pi, color=davidsonred] at (6,0) {$3$};
	\node[pi] at (7,0) {$6$}; 
	\node[pi] at (8,0) {$5$};
      \node[pi, color=davidsonred] at (9,0) {$9$}; 
	\end{scope}		
	
		\path[draw,line width=1,->] (-5,-19)--(-1,-19); 
	\node[pi] at (-3,-16){$\psi_2^{(6)}$};
	\begin{scope}[shift={(0,-24)}] 	
		\draw[step=1,lightgray,thin] (0,0) grid (9,9); 
	\path[ridge] (1,2)--(2,8)--(3,1)--(4,7)--(5,4)--(6,3)--(7,6)--(8,9)--(9,5); 
	\node[node1] at (1,2) {}; 
	\node[node0] at (2,8) {}; 
	\node[node2] at (3,1) {}; 
	\node[node0] at (4,7) {}; 
	\node[node1] at (5,4) {}; 
	\node[node2] at (6,3) {}; 
	\node[node0] at (7,6) {}; 
	\node[node0] at (8,9) {};
	\node[node2] at (9,5) {};
	\draw[thick,blue,-] (0,5) to (9,5);		
	\draw[thick,blue,-] (6,3) to (6,9);			
	
	\node[pi] at (1,0) {$2$}; 
	\node[pi] at (2,0) {$8$}; 
	\node[pi, color=davidsonred] at (3,0) {$1$}; 
      \node[pi] at (4,0) {$7$}; 
	\node[pi] at (5,0) {$4$}; 
	\node[pi, color=davidsonred] at (6,0) {$3$};
	\node[pi] at (7,0) {$6$}; 
	\node[pi] at (8,0) {$9$};
      \node[pi, color=davidsonred] at (9,0) {$5$}; 
	\end{scope}		
\path[draw,line width=1,->] (11,-19)--(15,-19); 
	\node[pi] at (13,-16){$\psi_1^{(9)}$};
	\begin{scope}[shift={(16,-24)}] 
			\draw[step=1,lightgray,thin] (0,0) grid (9,9); 
	\path[ridge] (1,2)--(2,8)--(3,1)--(4,7)--(5,4)--(6,3)--(7,9)--(8,6)--(9,5); 
	\node[node1] at (1,2) {}; 
	\node[node0] at (2,8) {}; 
	\node[node2] at (3,1) {}; 
	\node[node0] at (4,7) {}; 
	\node[node1] at (5,4) {}; 
	\node[node2] at (6,3) {}; 
	\node[node0] at (7,9) {}; 
	\node[node0] at (8,6) {};
	\node[node2] at (9,5) {};
	\node[pi] at (1,0) {$2$}; 
	\node[pi] at (2,0) {$8$}; 
	\node[pi, color=davidsonred] at (3,0) {$1$}; 
      \node[pi] at (4,0) {$7$}; 
	\node[pi] at (5,0) {$4$}; 
	\node[pi, color=davidsonred] at (6,0) {$3$};
	\node[pi] at (7,0) {$9$}; 
	\node[pi] at (8,0) {$6$};
      \node[pi, color=davidsonred] at (9,0) {$5$}; 
	\end{scope}

\end{tikzpicture}
\end{center}

\caption{The involution $\Psi$ on the permutation $931582674$}\label{fig:3}
\end{figure}



\begin{proof}[Proof of Theorem~\ref{main2}]

For  $\pi\in S_n$ and 
$\AREC(\pi)=(i_1,i_2,\ldots, i_l),$ 
 we define the operation  $\Psi$ on $\pi$ by 
 \begin{align}\label{eq:defPsi}
 \Psi(\pi)=\psi^{(i_l)}\circ\cdots\circ \psi^{(i_2)}\circ
 \psi^{(i_1)}(\pi).
 \end{align}

By \eqref{eq:defPsi} the mapping $\Psi$ is reversible with reverse
$$
\Psi^{-1}(\pi)=\psi^{(i_1)}\circ\psi^{(i_2)}\circ\ldots \circ\psi^{(i_l)}(\pi).
$$
Theorem~\ref{main2} follows from Lemma~\ref{lem:opera2}, Lemma~\ref{lemma2:main2} and Lemma~\ref{lem:opera2bis}.
\end{proof}


\begin{example} Figure~\ref{fig:3}.
For $\pi= 9\,3\,1\,5\,8\,2\,6\,7\,4$, we have $\AREC(\pi)=(3, 6,9)$.
 We procced from left to right. 
 
\begin{enumerate} 
\item For position $3$ with value $1$, we have $w=9\,3\,5\, 8\, 2\, 6\, 7\, 4$ and $w^c=2\,8\,6\,3\,9\,5\,4\,7$ Thus  $\psi_1^{(3)}: \pi\mapsto \pi'=2\,8\,1\,6\,3\,9\,5\,4\,7$. 
Next, we have $w=6\,3\, 9\, 5\, 4\, 7$ and $w^c=5\,9\,3\,6\,7\,4$
Thus $\psi_2^{(3)}: \pi'\mapsto \pi''=2\,8\,1\,5\,9\,3\,6\,7\,4$.
\item 
For  position $6$ with value $3$, we have 
$w=5\,9\,6\,7\,4$ and $w^c=7\,4\,6\,5\,9$. So
$\psi_1^{(6)}(\pi'')=2\,8\,1\,7\,4\,3\,6\,5\,9$. Next, we have $w=6\,5\,9$ and $w^c=6\,9\,5$.
Thus we have $\psi^{(6)}(\pi'')=2\,8\,1\,7\,4\,3\,6\,9\,5$.
\item 
For position $9$ with value $5$ we have 
$w=6\,9$ and $w^c=9\,6$. Finally we obtain
$\Psi(\pi)=2\,8\,1\,7\,4\,3\,9\,6\,5$.
\end{enumerate}

Now, we check the mesh patterns.
\begin{itemize}
\item
First, $\psi_1^{(3)}: \pi=9\,3\,1\,5\,8\,2\,6\,7\,4 \mapsto \pi'=2\,8\,1\,6\,3\,9\,5\,4\,7$, the pair $(9, 1)$ contributes the pattern $\pattern{scale=0.5}{2}{1/2,2/1}{2/2,2/0,1/0}$ without the patterns $\pattern{scale=0.5}{2}{1/2,2/1}{1/1,1/0,2/0,2/2}$, $\pattern{scale=0.5}{2}{1/2,2/1}{2/2,1/2,2/0,1/0,1/1}$, 
then the pair $(2,1)$ of $\pi'$ contributes the pattern $\pattern{scale=0.5}{2}{1/2,2/1}{2/1,2/0,1/0}$ without the patterns $\pattern{scale=0.5}{2}{1/2,2/1}{1/2,1/0,2/0,2/1}$, $\pattern{scale=0.5}{2}{1/2,2/1}{2/1,1/2,1/0,2/0,1/1}$, the operations $\psi_2^{(i)}(i=3,6), \psi_1^{(j)}(j=6,9)$ do not change the corresponding mesh pattens at $(2,1)$ of $\pi'$.
\item
Second, $\psi_2^{(3)}\circ \psi_1^{(3)}:\pi=9\,3\,1\,5\,8\,2\,6\,7\,4 \mapsto \pi''=2\,8\,1\,5\,9\,3\,6\,7\,4$,
it is easy to see $5\,8\,2\,6\,7\,4\sim 5\,9\,3\,6\,7\,4$. The pair $(8, 2)$ of $\pi$ contributes the pattern $\pattern{scale=0.5}{2}{1/2,2/1}{2/2,2/0,1/0}$ (resp. $\pattern{scale=0.5}{2}{1/2,2/1}{1/1,1/0,2/0,2/2}$, $\pattern{scale=0.5}{2}{1/2,2/1}{2/2,1/2,2/0,1/0,1/1}$ ),  then the pair $(9, 3)$ of $\pi''$ also contributes the pattern $\pattern{scale=0.5}{2}{1/2,2/1}{2/2,2/0,1/0}$ (resp. $\pattern{scale=0.5}{2}{1/2,2/1}{1/1,1/0,2/0,2/2}$, $\pattern{scale=0.5}{2}{1/2,2/1}{2/2,1/2,2/0,1/0,1/1}$ ),  
 $\psi_1^{(6)}:\pi''=2\,8\,1\,5\,9\,3\,6\,7\,4 \mapsto \pi'''=2\,8\,1\,7\,4\,3\,6\,5\,9$, the pair $(4,3)$ contributes the pattern $\pattern{scale=0.5}{2}{1/2,2/1}{2/1,2/0,1/0}$(resp.  $\pattern{scale=0.5}{2}{1/2,2/1}{1/2,1/0,2/0,2/1}$, $\pattern{scale=0.5}{2}{1/2,2/1}{2/1,1/2,1/0,2/0,1/1}$), the operations $\psi_2^{(6)}, \psi_1^{(9)}$ do not change the corresponding mesh pattens at $(4,3)$ of $\pi'''$.
\end{itemize}

\end{example}

\section{A remark on pattern Nr.\ 14 }

Recall  that an index
$i$ (with $1 < i \leq n$) is a \emph{succession} of $\sigma\in S_n$
 if $\sigma(i) = \sigma(i-1)+1$, see \cite[Section~5]{CHZ97}.
Thus an occurrence of the pattern Nr.\ 14 $= 
\pattern{scale = 0.5}{2}{1/1,2/2}{0/1,1/1,1/2,1/0,1/2,2/1}$ corresponds to
 a succession and  
 we can translate the results on successions in \cite[Section 5]{CHZ97} to
this pattern. For example,
letting 
$$
S_n(x)=\sum_{\pi\in S_n}x^{\pattern{scale = 0.4}{2}{1/1,2/2}{0/1,1/1,1/2,1/0,1/2,2/1}(\pi)}
$$
and  differtentiating the generating function\cite[(5.6)]{CHZ97}
\begin{align}
\sum_{n\geq 0} S_n(x)\frac{t^n}{n!}=\frac{e^{(x-1)t}}{1-t}+(1-x)\int_0^t \frac{e^{(x-1)z}}{1-z}dz
\end{align}
yields 
\begin{align}
\sum_{n\geq 0} S_{n+1}(x)\frac{t^n}{n!}=\frac{e^{(x-1)t}}{(1-t)^2}.
\end{align}
This is the exponential generating function given in \cite[A123513]{OEIS}.
We note that the ordinary generating function (cf. \cite[(5.8)]{CHZ97}) reads
\begin{align}
\sum_{n\geq 0} S_n(x)t^n=\sum_{n\geq 0} \frac{n!t^n}{[1-(x-1)t]^n}.
\end{align}

\section{Acknowledgement}
The first author was supported by the Israel Science Foundation (grant no. 1970/18).

\end{document}